\documentclass[11pt]{amsart}

\usepackage{amsfonts,amssymb,amsthm,eucal,color,bbm,verbatim,graphicx,url}
\usepackage[french,english]{babel}

\usepackage[margin=1.25in]{geometry}
\calclayout

\newtheorem{thm}{Theorem}
\newtheorem{cor}[thm]{Corollary}
\newtheorem{lemma}[thm]{Lemma}
\newtheorem{prop}[thm]{Proposition}

\newtheorem{conj}[thm]{Conjecture}

\theoremstyle{remark}
\newtheorem*{rmk}{Remark}

\newcommand{\R}{\mathbb{R}}

\newcommand{\N}{\mathbb{N}}

\newcommand{\Z}{\mathbb{Z}}

\DeclareMathOperator{\conv}{conv}

\newcommand{\inprod}[2]{\left\langle #1, #2 \right\rangle}

\newcommand{\abs}[1]{\left\vert #1 \right\vert}
\newcommand{\norm}[1]{\left\Vert #1 \right\Vert}

\newcommand{\eps}{\varepsilon}

\DeclareMathOperator{\vol}{vol}

\newcommand{\Set}[2]{\left\{#1 \mathrel{} \middle| \mathrel{} #2
  \right\}}

\newcommand{\Mag}[1]{\operatorname{Mag}\left(#1\right)}
\newcommand{\lin}[1]{\operatorname{lin}(#1)}

\allowdisplaybreaks[3]

\numberwithin{thm}{section}
\numberwithin{equation}{section}

\author{Mark W.\ Meckes}

\address{Department of Mathematics, Applied Mathematics, and
  Statistics, Case Western Reserve University, 10900 Euclid Ave.,
  Cleveland, Ohio 44106, U.S.A.}

\email{mark.meckes@case.edu}

\title[Magnitude and Holmes--Thompson intrinsic
volumes]{Magnitude and Holmes--Thompson intrinsic volumes of
  convex bodies}

%\date{Draft of 3 June 2022}

\begin{document}

\begin{abstract}
  Magnitude is a numerical invariant of compact metric spaces,
  originally inspired by category theory and now known to be related
  to myriad other geometric quantities. Generalizing earlier results
  in $\ell_1^n$ and Euclidean space, we prove an upper bound for the
  magnitude of a convex body in a hypermetric normed space in terms of
  its Holmes--Thompson intrinsic volumes. As applications of this
  bound, we give short new proofs of Mahler's conjecture in the case
  of a zonoid, and Sudakov's minoration inequality.
\end{abstract}

\maketitle

MSC class: 52A20; 46B20, 51F99, 52A22

\section{Introduction}
\label{S:intro}

Magnitude is a numerical isometric invariant of metric spaces recently
introduced by Leinster \cite{Leinster} based on category-theoretic
considerations.  It has rapidly found connections with a large and
growing number of areas of mathematics; see \cite{LeMe-survey} for a
survey as of 2017, sections 6.4--6.5 of \cite{Leinster-div-book} for a
more recent succinct account, and \cite{Leinster-bibliography} for a
more complete bibliography.  In appropriate contexts, magnitude
encodes a number of classical geometric quantities, including volume
\cite{Willerton-homogeneous,BaCa,LeMe-survey}, Minkowski dimension
\cite{MM-mag-etc}, surface area \cite{GiGo-AJM}, and other curvature
integrals \cite{Willerton-homogeneous,GiGo-AJM,GiGo-Willmore,GGL}.

The main result of this paper, Theorem \ref{T:hypermetric-mag},
provides an upper bound on the magnitude of a convex body in a
hypermetric normed space in terms of its Holmes--Thompson intrinsic
volumes, generalizing the main result of \cite{MM-intrinsic} for
convex bodies in Euclidean spaces. (All these terms will be defined in
the following paragraphs.) In addition to generalizing some known
results about magnitude from $\ell_1^n$ and Euclidean (or Hilbert)
spaces to more general normed spaces, we will see that this upper
bound on magnitude can be used to quickly deduce some important known
results in convex geometry, namely Mahler's conjecture in the case of
a zonoid, and Sudakov's minoration inequality.  Finally, the proof of
Theorem \ref{T:hypermetric-mag} helps elucidate the relationship
between Holmes--Thompson intrinsic volumes and Leinster's theory of
$\ell_1$ integral geometry \cite{Leinster-int}, which was developed
largely in order to state and prove the result from \cite{LeMe-survey}
on which Theorem \ref{T:hypermetric-mag} is based.

A metric space $X = (X,d)$ is called \textbf{positive definite} if
the matrix $[e^{-d(x_i, x_j)}]_{1\le i,j \le k}$ is positive definite
for every finite collection of distinct points $x_1, \dots, x_k \in
X$.  If $X$ is a compact positive definite metric space, the
\textbf{magnitude} of $X$ can be defined as
\begin{equation}
  \label{E:mag-def}
  \Mag{X,d} = \sup \Set{\frac{\mu(X)^2}{\int \int e^{-d(x,y)} \
      d\mu(x) d\mu(y)}}{\mu \in M(X)},
\end{equation}
where $M(X)$ is the set of finite signed Borel measures on $X$
\cite{MM-pdms}.  It follows immediately from this definition that
$\Mag{X,d} \in [1,\infty]$, and that magnitude is monotone with
respect to set containment for positive definite spaces.

It is a classical fact that for a normed space $E$, positive
definiteness is equivalent both to the property of being isometrically
isomorphic to a subspace of $L_1$, and being hypermetric (which for
general metric spaces is a stronger property than positive
definiteness); see, e.g., \cite[Section 6.1]{DeLa}.  We will refer to
these spaces as hypermetric normed spaces below. Examples include
$\ell_p^n = (\R^n, \norm{\cdot}_p)$ for $1 \le p \le 2$, in particular
$\ell_2^n$, which is $\R^n$ with its usual Euclidean norm.

Let $\mathcal{K}^n$ be the class of compact, convex subsets of $\R^n$,
equipped with the Hausdorff distance. Recall that a \textbf{convex
  valuation} on $\R^n$ is a function $V: \mathcal{K}^n \to \R$ such
that
\begin{equation}
  \label{E:inclusion-exclusion}
  V(K \cup L) = V(K) + V(L) - V(K\cap L)
\end{equation}
whenever $K, L, K \cup L \subseteq \mathcal{K}^n$.  A convex valuation
$V$ is said to be $m$-homogeneous for $m \in \N$ if
$V(t K) = t^m V(K)$ for every $K \in \mathcal{K}^n$ and $t>0$.

A consequence of Hadwiger's classical theorem (see e.g.\
\cite{KlRo,ScWe,Schneider}) is that up to scalar multiples, there is a
unique continuous, $m$-homogeneous, rigid motion-invariant convex
valuation $V_m$ on $\R^n$ for each $0 \le m \le n$.  With an
appropriate normalization $V_m(K) = \vol_m(K)$ whenever
$K \in \mathcal{K}^n$ is $m$-dimensional, and $V_m$ is then called the
$m^{\mathrm{th}}$ \textbf{intrinsic volume}. These quantities, under
various normalization and indexing conventions, play a central role in
integral geometry.

There are multiple natural choices for the normalization of the volume
(i.e., Lebesgue measure) on a finite-dimensional normed space $E$. For
the purposes of integral geometry, it turns out that the most
convenient normalization is the \emph{Holmes--Thompson volume} (see
e.g.\ pages 207--209 of \cite{ScWe} for discussion and references).
If $E$ is identified with $(\R^n, \norm{\cdot})$ and $\R^n$ is also
given its usual Euclidean structure, then the \textbf{Holmes--Thompson
  volume} of $A \subseteq E$ is given, up to a factor depending only
on $n$, by
\begin{equation}
  \label{E:HT-volume}
  \vol_{\mathrm{HT}}^E(A) = \vol_{2n}(A \times B^\circ),
\end{equation}
where $B$ is the unit ball of $E$,
$B^\circ = \Set{y \in \R^n}{\inprod{x}{y} \le 1 \text{ for every } x
  \in B}$ is its polar body, and $\vol_{2n}$ is the standard
symplectic volume on $\R^n \times (\R^n)^*$ (equal to the usual
normalization of Lebesgue measure on $\R^{2n}$ under the standard
identification $(\R^n)^* \cong \R^n$).  The Holmes--Thompson volume is
invariant under linear changes of coordinates, and thus independent of
the precise identification of $E$ with $\R^n$ or Euclidean structure.
If $F \subseteq E$ is an affine subspace, then we further define
$\vol_{\mathrm{HT}}^F(A) = \vol_{\mathrm{HT}}^{F_0}(A_0)$ for
$A \subseteq F$, where $F_0$ is the linear subspace of $E$ which is
parallel to $F$, and $A_0$ is a translate of $A$ lying in $F_0$.  (The
definition can be further extended to Finsler manifolds, but we will
not make use of that level of generality here.)

With the normalization defined by \eqref{E:HT-volume},
\begin{equation}
  \label{E:l2-HT-vol}
  \vol_{\mathrm{HT}}^{\ell_2^n}(A) = \vol_{2n} (A \times B_2^n) = \omega_n \vol_n(A) 
\end{equation}
and
\begin{equation}
  \label{E:l1-HT-vol}
  \vol_{\mathrm{HT}}^{\ell_1^n}(A) = \vol_{2n} (A \times B_\infty^n) = 2^n \vol_n(A). 
\end{equation}
Here and below $B_p^n$ denotes the unit ball of $\ell_p^n$ for
$1 \le p \le \infty$, and $\omega_n = \vol_n(B_2^n)$. In contrast to
this, it is standard practice to introduce a factor of $\omega_n^{-1}$
in the definition \eqref{E:HT-volume} of $\vol_{\mathrm{HT}}^E$ in
order to force the equality $\vol_{\mathrm{HT}}^{\ell_2^n} =
\vol_n$. This convention reflects the central role of the Euclidean
space $\ell_2^n$ in integral geometry.  However, in the theory of
magnitude, $\ell_1^n$ plays a more central role, and the convention
adopted above is more convenient for the statement and proof of our
main result.

The following partial analogue of Hadwiger's theorem for normed spaces
is proved in \cite{ScWi,Schneider-ivms}, although it is not given a
self-contained statement.

\begin{prop}
  \label{T:HT-unique}
  Let $E = (\R^n, \norm{\cdot})$ be a finite-dimensional hypermetric
  normed space.  For each $1 \le m \le n$ there is a unique even,
  continuous, $m$-homogeneous, translation-invariant convex valuation
  $\mu^E_m$ such that $\mu^E_m(K) = \vol_{\mathrm{HT}}^F(K)$ whenever
  $K \in \mathcal{K}^n$ is $m$-dimensional and $F \subseteq E$ is the
  affine subspace spanned by $K$.
\end{prop}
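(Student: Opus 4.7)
The plan is to split the claim into uniqueness and existence.

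For uniqueness, I would invoke Klain's theorem on even, continuous, translation-invariant, $m$-homogeneous convex valuations on $\R^n$: any such valuation $\phi$ is uniquely determined by its \emph{Klain function} $\kappa_\phi \cln G(n,m) \to \R$ on the Grassmannian of $m$-dimensional linear subspaces, where $\phi|_L = \kappa_\phi(L) \cdot \vol_m|_L$ for every $L \in G(n,m)$ (and hence on every translate of $L$ by translation invariance). The requirement $\mu_m^E(K) = \vol_{\mathrm{HT}}^F(K)$ for $m$-dimensional $K$ spanning $F$ forces $\kappa_{\mu_m^E}(L)$ to equal the Holmes--Thompson density on $L$, which is determined entirely by the restriction of the norm to $L$. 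Hence at most one valuation with the prescribed property exists.

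For existence, I would use the fact that $E$ hypermetric implies that $B^\circ$ is a zonoid: there is an even finite Borel measure $\nu$ on $\sph^{n-1}$ with $\norm{x} = \int_{\sph^{n-1}} \abs{\inprod{u}{x}}\, d\nu(u)$. From $\nu$ one builds, for each $m$, a translation-invariant positive Borel measure $\mu_\nu$ on the affine Grassmannian of $(n-m)$-planes, concretely as the image of $\nu^{\otimes m} \otimes \vol_m$ under the map sending $(u_1,\dots,u_m;p)$ to the $(n-m)$-plane through $p$ orthogonal to $\lin{u_1,\dots,u_m}$, weighted by the Gram determinant $\abs{\det[\inprod{u_i}{u_j}]}^{1/2}$. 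Setting
\begin{equation*}
  \mu_m^E(K) = \int_{\mathrm{Aff}(n,n-m)} \chi(K \cap F)\, d\mu_\nu(F),
\end{equation*}
with $\chi$ the Euler characteristic, produces an even, continuous, translation-invariant, $m$-homogeneous convex valuation by standard properties of Crofton-type integrals.

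The main obstacle is then to check that $\mu_m^E$ has the required normalization on $m$-dimensional bodies. For $K \subseteq F$ with $F$ an $m$-dimensional affine subspace parallel to $F_0 \in G(n,m)$, Fubini over $F_0$ and its orthogonal complement reduces the Crofton integral to $\vol_m(K)$ times a density which, by the projection formula for zonoids applied to the generating measure $\nu$ of $B^\circ$, equals $\vol_m(B^\circ \mid F_0)$, the volume of the orthogonal projection of $B^\circ$ onto $F_0$. Since $(B \cap F_0)^\circ_{F_0} = B^\circ \mid F_0$ by standard polarity, this matches $\vol_m((B \cap F_0)^\circ_{F_0})$, so $\mu_m^E(K) = \vol_m(K) \cdot \vol_m((B \cap F_0)^\circ_{F_0}) = \vol_{\mathrm{HT}}^F(K)$ by \eqref{E:HT-volume}. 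The delicate part of this computation is calibrating the Jacobian factors in $\mu_\nu$ so that the Crofton density on each $L \in G(n,m)$ reproduces the Holmes--Thompson density exactly; this is the combinatorial-integral-geometric heart of the Schneider--Wieacker construction cited in the statement.
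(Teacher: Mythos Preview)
The paper does not actually prove this proposition; it simply cites \cite{ScWi,Schneider-ivms} for the result, noting that it ``is not given a self-contained statement'' there. So there is no in-paper argument to compare against.

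That said, your sketch is a faithful outline of how the cited references establish the result. Uniqueness via Klain's injectivity theorem is exactly the standard mechanism, and existence via a Crofton-type integral against a measure on the affine Grassmannian built from the generating measure $\nu$ of $B^\circ$ is the Schneider--Wieacker construction. Your identification $(B \cap F_0)^{\circ}_{F_0} = B^\circ \mid F_0$ is the key polarity fact linking the Crofton density to the Holmes--Thompson normalization, and the zonoid projection formula you invoke is precisely what makes the hypermetric hypothesis enter. The one place where more care would be needed in a full proof is, as you note, pinning down the exact constant: the Gram-determinant weight, the $m!$ from unordered versus ordered tuples, and the normalization of $\vol_m$ on $F_0$ all have to be reconciled, and this is where the paper's own computation of $c_m = 2^m/m!$ in Corollary~\ref{T:l1-HT-iv} (via the $\ell_1^n$ test case) becomes relevant. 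But as a proof plan your proposal is sound and matches the literature the paper defers to.
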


As discussed in the introduction of \cite{FaWa}, results in
\cite{AlFe,Bernig} imply that Proposition \ref{T:HT-unique} holds for
certain more general normed spaces. However, it is also shown in
\cite{Schneider-ivms} that the valuations $\mu^E_m$ necessarily have
some pathological properties (in particular, failure of monotonicity)
if $E$ is not hypermetric.

We set $\mu^E_0 = 1$. The valuations $\mu^E_m$ for $0 \le m \le n$ are
referred to as the \textbf{Holmes--Thompson intrinsic volumes} on $E$.
Note that with our normalization convention,
$\mu_m^{\ell_2^n} = \omega_m V_m$ by \eqref{E:l2-HT-vol}.  We will
denote by $\widetilde{\mu}_m^E = \omega_m^{-1} \mu_m^E$ the usual
normalization of the Holmes--Thompson intrinsic volumes, so that
$\widetilde{\mu}_m^{\ell_2^n} = V_m$.

We are now ready to state the main theorem of this paper.

\begin{thm}
  \label{T:hypermetric-mag}
  Suppose that $E = (\R^n, \norm{\cdot})$ is a hypermetric normed
  space and let $K \in \mathcal{K}^n$. Then
  \[
    \Mag{K, \norm{\cdot}} \le \sum_{m=0}^n 4^{-m} \mu_m^E(K)
    \le e^{\mu_1^E(K) / 4}.
  \]
\end{thm}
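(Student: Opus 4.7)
The plan is to reduce the first inequality to the already-known case of convex bodies in $\ell_1^N$ by exploiting the isometric linear embedding of hypermetric normed spaces into $L_1$, and then to derive the exponential bound from an Alexandrov--Fenchel-type estimate. Since $E$ is hypermetric, its norm admits an integral representation $\|x\|_E = \int_\Omega |\phi_\omega(x)| \, d\mu(\omega)$ with positive measure $\mu$ and linear functionals $\phi_\omega$. Approximating $\mu$ by finitely supported measures produces a sequence of hypermetric norms $\|\cdot\|_k$ on $\R^n$ converging to $\|\cdot\|_E$ such that each $(\R^n, \|\cdot\|_k)$ admits an isometric linear embedding $\iota_k$ into some $\ell_1^{N_k}$. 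Both the magnitude and the Holmes--Thompson intrinsic volumes are continuous under such norm convergence, so one may assume from the outset that $E$ isometrically embeds in some $\ell_1^N$ via a linear map $\iota$.

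In that setting I would verify the invariance $\mu_m^{\ell_1^N}(\iota(K)) = \mu_m^E(K)$ for every $m$ and every $K \in \mathcal{K}^n$. The restriction of $\mu_m^{\ell_1^N}$ to convex bodies lying in the subspace $\iota(E) \subseteq \ell_1^N$ is a continuous, even, translation-invariant, $m$-homogeneous valuation that agrees with the Holmes--Thompson volume on $m$-dimensional bodies, because the $\ell_1^N$-norm restricted to any affine subspace of $\iota(E)$ coincides, via $\iota^{-1}$, with the $E$-norm. Uniqueness in Proposition \ref{T:HT-unique}, applied to the hypermetric subspace $\iota(E) \cong E$, then forces this restricted valuation to equal $\mu_m^E$ after identification. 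Because magnitude is an isometric invariant, $\Mag{K, \|\cdot\|_E} = \Mag{\iota(K), \|\cdot\|_1}$, and the first inequality reduces to the known $\ell_1^N$ magnitude bound---the result from \cite{LeMe-survey} referenced in the introduction---applied to the convex body $\iota(K) \subseteq \ell_1^N$.

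The second inequality reduces to the Alexandrov--Fenchel-type bound $\mu_m^E(K) \le \mu_1^E(K)^m / m!$, since summing then gives $\sum_m 4^{-m}\mu_m^E(K) \le \sum_m (\mu_1^E(K)/4)^m/m! = e^{\mu_1^E(K)/4}$. The main obstacle I foresee is the approximation argument in the first paragraph, which requires verifying that magnitude is continuous under the zonotope-based convergence of norms (in particular, that positive definiteness is preserved uniformly along the sequence) and that the Holmes--Thompson intrinsic volumes converge jointly in the body and the norm. A secondary obstacle is establishing the inequality $m!\,\mu_m^E(K) \le \mu_1^E(K)^m$ in the hypermetric generality; this must genuinely exploit the hypermetricity hypothesis, since monotonicity-type properties of the $\mu_m^E$ can fail for non-hypermetric norms, as noted after Proposition \ref{T:HT-unique}.
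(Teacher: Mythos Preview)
Your strategy is essentially the paper's: approximate the generating measure by discrete measures to obtain norms $\norm{\cdot}_{E_N}$ that embed isometrically in $\ell_1^N$, apply the $\ell_1$ magnitude bound there, and pass to the limit. Two small corrections and one comment. First, magnitude is only known to be \emph{lower} semicontinuous under Gromov--Hausdorff convergence, not continuous; fortunately that is the direction needed for an upper bound, and it is exactly what the paper invokes. Second, both obstacles you flag are dispatched in the paper by the Schneider--Wieacker integral representation of $\mu_m^E$ in terms of the generating measure (Proposition \ref{T:L1-HT-iv}): the paper computes $V_m'(T_N^*K)$ directly as a discrete version of that integral, so the identification with $2^{-m}\mu_m^{E_N}(K)$ and the convergence to $2^{-m}\mu_m^E(K)$ come out simultaneously, and the inequality $\mu_m^E(K)\le \mu_1^E(K)^m/m!$ (Lemma \ref{T:mu1-ub}) follows from the trivial inclusion $AK\subseteq A_1K\times A_2K$ inside that same integral. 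Your alternative of invoking the uniqueness clause of Proposition \ref{T:HT-unique} to identify $\mu_m^{\ell_1^N}\vert_{\iota(E)}$ with $\mu_m^E$ is clean, but it still leaves the convergence $\mu_m^{E_N}(K)\to\mu_m^E(K)$ to be proved separately, and for that you would most likely fall back on the same integral formula.
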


For reference, in terms of the the usual convention for
Holmes--Thompson intrinsic volumes, the conclusion of Theorem
\ref{T:hypermetric-mag} states that
\[
    \Mag{K, \norm{\cdot}} \le \sum_{m=0}^n \frac{\omega_m}{4^m} \widetilde{\mu}_m^E(K)
    \le e^{\widetilde{\mu}_1^E(K) / 2}.
\]

Theorem \ref{T:hypermetric-mag} will be deduced from \cite[Theorem
4.6]{LeMe-survey}, stated as Theorem \ref{T:l1-mag} below, which is
essentially the special case of Theorem \ref{T:hypermetric-mag} for
$E = \ell_1^n$.  The case of $E = \ell_2^n$ was previously deduced
from Theorem \ref{T:l1-mag} in \cite{MM-intrinsic}.

In Sections \ref{S:Mahler} and \ref{S:Sudakov} below we will combine
the upper bound from Theorem \ref{T:hypermetric-mag} with easy lower
bounds on magnitude to obtain new proofs of results in convex geometry
which are not obviously related to magnitude. In the rest of this
section, we will state some immediate consequences of Theorem
\ref{T:hypermetric-mag} about magnitude itself, and in particular for
the behavior of the \textbf{magnitude function}
$t \mapsto \Mag{tK, \norm{\cdot}}$ when $t \to 0$. In contrast to
this, in Section \ref{S:Mahler} we will consider the limit of the
magnitude function as $t \to \infty$, and in Section \ref{S:Sudakov}
we will use a specific finite value of $t$.  This demonstrates that
the upper bound in Theorem \ref{T:hypermetric-mag}, although not
necessarily sharp at any value of $t > 0$, is strong enough to yield
useful consequences over the entire range of rescalings of $K$.

Our first consequence of Theorem \ref{T:hypermetric-mag} is already
known.  It follows by applying the theorem to the convex hull of $X$,
using the monotonicity of magnitude.

\begin{cor}
  \label{T:fd-one-point}
  If $E = (\R^n, \norm{\cdot})$ is a finite-dimensional hypermetric
  normed space and $X \subseteq E$ is compact, then
  $\Mag{X, \norm{\cdot}} < \infty$, and
  \[
    \lim_{t\to 0^+} \Mag{tX, \norm{\cdot}} = 1.
  \]
\end{cor}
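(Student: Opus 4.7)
The plan is to follow the hint: reduce to a convex body via the convex hull operation, invoke Theorem \ref{T:hypermetric-mag}, and exploit the $m$-homogeneity of the Holmes--Thompson intrinsic volumes. Set $K = \conv(X)$. Since $X$ is compact in the finite-dimensional space $\R^n$, a standard application of Carath\'eodory's theorem (expressing $K$ as the continuous image of $X^{n+1}$ times the standard $n$-simplex) shows that $K$ is also compact, so $K \in \mathcal{K}^n$. Because $E$ is hypermetric, hence positive definite, magnitude is monotone with respect to set containment; combined with the inclusion $X \subseteq K$, this yields $\Mag{X, \norm{\cdot}} \le \Mag{K, \norm{\cdot}}$.

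Applying Theorem \ref{T:hypermetric-mag} to $K$ then gives
\[
\Mag{X, \norm{\cdot}} \le \Mag{K, \norm{\cdot}} \le \sum_{m=0}^n 4^{-m} \mu_m^E(K),
\]
which is a finite sum of finite quantities, since each Holmes--Thompson intrinsic volume of a compact convex body is finite. This establishes $\Mag{X, \norm{\cdot}} < \infty$.

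For the limit, observe that $\conv(tX) = tK$ for every $t > 0$, so the same argument, combined with the $m$-homogeneity of $\mu_m^E$, gives
\[
1 \le \Mag{tX, \norm{\cdot}} \le \sum_{m=0}^n 4^{-m} \mu_m^E(tK) = \sum_{m=0}^n 4^{-m} t^m \mu_m^E(K),
\]
where the left-hand inequality is the universal lower bound noted just after \eqref{E:mag-def}. As $t \to 0^+$, every term with $m \ge 1$ vanishes, leaving only $\mu_0^E(K) = 1$, and the squeeze yields $\lim_{t \to 0^+} \Mag{tX, \norm{\cdot}} = 1$.

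There is no significant obstacle here; the corollary is an essentially immediate consequence of Theorem \ref{T:hypermetric-mag} once one notes that (i) the convex hull of a compact subset of $\R^n$ is compact, (ii) magnitude on a positive definite space is monotone under inclusion, and (iii) a continuous $m$-homogeneous valuation scales as $t^m$ on dilates.
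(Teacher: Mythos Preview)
Your argument is correct and matches the paper's own reasoning, which simply notes that the corollary follows by applying Theorem \ref{T:hypermetric-mag} to $\conv X$ together with the monotonicity of magnitude. You have filled in the details the paper leaves implicit (compactness of the convex hull, $m$-homogeneity of $\mu_m^E$, and the squeeze using $\Mag \ge 1$), but the approach is identical.
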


The finiteness statement in Corollary \ref{T:fd-one-point} was first
proved in this generality in \cite[Proposition 4.13]{LeMe-survey}
using Fourier analysis. In the recent paper \cite{LeMe-extremal}, a
different proof was given of the finiteness statement which also
yielded the limit statement (called the \textbf{one-point property} in
\cite{LeMe-extremal}).  Like the proof of Theorem
\ref{T:hypermetric-mag}, the proof of Corollary \ref{T:fd-one-point}
given in \cite{LeMe-extremal} is based on \cite[Theorem
4.6]{LeMe-survey}.

Theorem \ref{T:hypermetric-mag} allows Corollary \ref{T:fd-one-point}
to be generalized to certain infinite-dimensional sets.  If $K
\subseteq L_1$ is compact and convex, we define
\[
  \mu_1^{L_1}(K) = \sup\Set{\mu_1^F(K\cap F)}{F\subseteq L_1 \text{ is a
      finite-dimensional affine subspace}}.
\]
This definition is natural for subsets of $L_1$ since the
Holmes--Thompson intrinsic volumes are monotone with respect to set
containment in hypermetric normed spaces (and in fact, only in the
hypermetric case \cite{Schneider-ivms}).  Together with the facts that
magnitude is monotone and lower semicontinuous for compact positive
definite spaces \cite[Theorem 2.6]{MM-pdms}, Theorem
\ref{T:hypermetric-mag} yields the following.

\begin{cor}
  \label{T:id-one-point}
  If $X \subseteq L_1$ is compact and $\mu_1^{L_1}(\conv X) < \infty$,
  then $\Mag{X, \norm{\cdot}_1} < \infty$, and
  \[
    \lim_{t\to 0^+} \Mag{tX, \norm{\cdot}_1} = 1.
  \]
\end{cor}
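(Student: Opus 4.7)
The plan is to approximate $X$ from inside by finite subsets whose convex hulls lie in finite-dimensional subspaces of $L_1$, apply Theorem~\ref{T:hypermetric-mag} inside each such subspace, and then pass to the limit via the lower semicontinuity of magnitude.

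Since $X$ is compact it is separable; fix a countable dense sequence $(x_j)_{j\ge 1}$ in $X$ and set $X_N = \{x_1, \ldots, x_N\}$ and $K_N = \conv X_N$. Each $K_N$ is a compact convex subset of a finite-dimensional affine subspace $F_N \subseteq L_1$, and since every linear subspace of $L_1$ is itself hypermetric, Theorem~\ref{T:hypermetric-mag} applies in $F_N$. For any fixed $t > 0$, the monotonicity of magnitude on positive definite spaces together with Theorem~\ref{T:hypermetric-mag} and the $1$-homogeneity of $\mu_1^{F_N}$ give
\[
  \Mag{tX_N, \norm{\cdot}_1} \le \Mag{tK_N, \norm{\cdot}_1} \le e^{\mu_1^{F_N}(tK_N)/4} = e^{t\,\mu_1^{F_N}(K_N)/4}.
\]
Since $K_N \subseteq \conv X \cap F_N$ and Holmes--Thompson intrinsic volumes are monotone in hypermetric normed spaces, $\mu_1^{F_N}(K_N) \le \mu_1^{L_1}(\conv X)$, so
\[
  \Mag{tX_N, \norm{\cdot}_1} \le e^{t\,\mu_1^{L_1}(\conv X)/4}.
\]

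Because $(x_j)$ is dense in the compact set $X$, $X_N \to X$ in Hausdorff distance, and the same is true after scaling by $t$. Combined with $X_N \subseteq X$, the monotonicity and lower semicontinuity of magnitude on compact positive definite spaces \cite[Theorem 2.6]{MM-pdms} yield a monotone limit $\Mag{tX_N, \norm{\cdot}_1} \to \Mag{tX, \norm{\cdot}_1}$. Passing to the limit in the bound above,
\[
  \Mag{tX, \norm{\cdot}_1} \le e^{t\,\mu_1^{L_1}(\conv X)/4} < \infty,
\]
and since $\Mag{tX, \norm{\cdot}_1} \ge 1$ always, letting $t \to 0^+$ produces the one-point property.

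The main obstacle I anticipate is the careful combination of lower semicontinuity with inclusion monotonicity to extract a genuine limit (rather than only a $\liminf$), and the unpacking of the definition of $\mu_1^{L_1}(\conv X)$ needed to justify the comparison $\mu_1^{F_N}(K_N) \le \mu_1^{L_1}(\conv X)$; once these two technical points are handled, the finite-dimensional Theorem~\ref{T:hypermetric-mag} does all the real work.
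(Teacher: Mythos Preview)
Your argument is correct and follows essentially the same route the paper sketches: approximate from within by finite-dimensional pieces, apply Theorem~\ref{T:hypermetric-mag} there, and pass to the limit using monotonicity and lower semicontinuity of magnitude together with monotonicity of the Holmes--Thompson intrinsic volumes. Two small remarks on your closing concerns: you do not actually need a genuine limit, since lower semicontinuity alone gives $\Mag{tX} \le \liminf_N \Mag{tX_N}$, and that $\liminf$ is already bounded by $e^{t\,\mu_1^{L_1}(\conv X)/4}$; and the comparison $\mu_1^{F_N}(K_N) \le \mu_1^{L_1}(\conv X)$ is immediate from monotonicity of $\mu_1^{F_N}$ (giving $\mu_1^{F_N}(K_N) \le \mu_1^{F_N}(\conv X \cap F_N)$) followed by the very definition of $\mu_1^{L_1}$ as a supremum over finite-dimensional affine subspaces.
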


A version of Corollary \ref{T:id-one-point} was proved in Corollaries
2 and 3 of \cite{MM-intrinsic} for subsets of a Hilbert space, where
the relevant class of sets are known as GB-bodies.  Since a separable
Hilbert space embeds isometrically in $L_1$, Corollary
\ref{T:id-one-point} generalizes those results.  The following
conjecture is motivated by both Corollary \ref{T:id-one-point} and the
proof of \cite[Theorem 2.1]{LeMe-extremal}, which gives the first
known example of a compact positive definite metric space with
infinite magnitude.

\begin{conj}
  Suppose that $K \subseteq L_1$ is compact and convex.  Then $\Mag{K,
    \norm{\cdot}_1} < \infty$ if and only if $\mu_1^{L_1}(K) < \infty$.
\end{conj}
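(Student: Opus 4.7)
The easy direction is immediate from Corollary \ref{T:id-one-point}: finite $\mu_1^{L_1}(K)$ implies finite magnitude. So the content lies in the reverse implication, which I would attack by contrapositive. Suppose $\mu_1^{L_1}(K) = \infty$; by the definition of $\mu_1^{L_1}$ there is a sequence of finite-dimensional affine subspaces $F_n \subseteq L_1$ with $\mu_1^{F_n}(K \cap F_n) \to \infty$. Each $K_n := K \cap F_n$ is a compact convex set in the finite-dimensional hypermetric normed space $F_n$, and monotonicity of magnitude on positive definite spaces gives $\Mag{K, \norm{\cdot}_1} \geq \Mag{K_n, \norm{\cdot}_1}$. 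It therefore suffices to establish a universal quantitative lower bound of the form
\[
  \Mag{K, \norm{\cdot}} \geq g\bigl(\mu_1^E(K)\bigr),
\]
valid for every convex body $K$ in every finite-dimensional hypermetric normed space $E$, where $g\colon [0,\infty) \to [0,\infty)$ is some function, independent of $E$, with $g(t) \to \infty$ as $t \to \infty$.

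To attempt such a bound I would set up a dichotomy based on an $\ell_1$-type Cauchy--Kubota integral representation of $\mu_1^E(K)$ as a weighted average of one-dimensional widths. Either the widths of $K$ concentrate on boundedly many directions, so that $K$ contains a segment whose length $\ell$ tends to infinity with $\mu_1^E(K)$, or else large $\mu_1^E(K)$ is produced by moderate widths in a proliferating number of nearly independent directions. The first case is easy: a segment of length $\ell$ in any hypermetric normed space is isometric to $[0,\ell] \subset \R$, hence has magnitude $1 + \ell/2$, and monotonicity transfers this lower bound to $K$. The second case is the heart of the matter: the goal would be to extract from ``many moderate widths'' a $t$-separated subset of $K$ of cardinality $N$ with $t \gg \log N$, for then the elementary bound $\Mag{K, \norm{\cdot}} \geq N/(1 + (N-1) e^{-t})$ is close to $N$ and yields a diverging lower bound.

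The main obstacle is precisely the second half of the dichotomy: converting large $\mu_1^E(K)$ into a well-separated packing of controlled scale amounts to a \emph{converse} Sudakov minoration tailored to $L_1$. Section \ref{S:Sudakov} uses Theorem \ref{T:hypermetric-mag} to derive Sudakov's minoration in one direction, but the converse in Hilbert space already requires Talagrand's majorizing measure theorem, and the analogous machinery in $L_1$ is less developed. A naive dimensional argument fails because, for $K \subseteq \ell_1^n$, the direct relationship between $\mu_1^E(K)$ and coordinate widths loses a factor of $n$, which is fatal in the limit $n \to \infty$ relevant for arbitrary $K \subseteq L_1$. Bridging this gap is what keeps the statement in the conjectural range; the existence of the infinite-magnitude example in \cite[Theorem 2.1]{LeMe-extremal} shows that the two-sided comparison sought here is at least consistent with what is known.
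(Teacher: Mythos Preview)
This statement is a \emph{conjecture} in the paper, not a theorem; the paper offers no proof and does not claim one. There is therefore nothing to compare your argument against. Your proposal is accurate in recognizing this: you correctly observe that the implication $\mu_1^{L_1}(K) < \infty \Rightarrow \Mag{K,\norm{\cdot}_1} < \infty$ is exactly Corollary~\ref{T:id-one-point}, and you then sketch a plausible line of attack on the converse while explicitly identifying the missing ingredient --- a quantitative lower bound $\Mag{K,\norm{\cdot}} \ge g(\mu_1^E(K))$ uniform over finite-dimensional hypermetric $E$, which would amount to a converse Sudakov-type statement in $L_1$. You are right that no such result is currently available, and your closing paragraph correctly locates the obstruction.

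In short: the paper does not prove this, your proposal does not prove it either, and you are candid about why. That is the appropriate outcome here.
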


The one-point property can be sharpened to the following first-order
bound on the magnitude function for small $t$, generalizing part of
\cite[Corollary 6]{MM-intrinsic} for the Euclidean case.

\begin{cor}
  \label{T:limsup}
  Suppose that $E = (\R^n, \norm{\cdot})$ is a hypermetric normed
  space and let $K \in \mathcal{K}^n$. Then
  \[
    \limsup_{t\to 0^+} \frac{\Mag{tK, \norm{\cdot}} - 1}{t} \le
    \frac{1}{4} \mu_1^E(K).
  \]
\end{cor}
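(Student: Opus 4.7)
The plan is to apply Theorem \ref{T:hypermetric-mag} to the rescaled body $tK$ and exploit homogeneity term-by-term. Since each $\mu_m^E$ is $m$-homogeneous and $\mu_0^E = 1$, we have $\mu_m^E(tK) = t^m \mu_m^E(K)$ for every $0 \le m \le n$, so Theorem \ref{T:hypermetric-mag} gives
\[
\Mag{tK, \norm{\cdot}} \le \sum_{m=0}^n 4^{-m} t^m \mu_m^E(K) = 1 + \frac{t}{4}\mu_1^E(K) + \sum_{m=2}^n 4^{-m} t^m \mu_m^E(K)
\]
for every $t > 0$.

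Next I would subtract $1$ from both sides and divide by $t$, yielding
\[
\frac{\Mag{tK, \norm{\cdot}} - 1}{t} \le \frac{1}{4}\mu_1^E(K) + \sum_{m=2}^n 4^{-m} t^{m-1} \mu_m^E(K).
\]
Each term in the remaining sum carries a factor $t^{m-1}$ with $m \ge 2$, so the sum tends to $0$ as $t \to 0^+$ (the coefficients $\mu_m^E(K)$ are finite real numbers depending only on $K$ and $E$, not on $t$). Taking $\limsup$ as $t \to 0^+$ on both sides immediately gives the stated bound.

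There is no real obstacle here; the argument is a one-line consequence of Theorem \ref{T:hypermetric-mag} once the homogeneity of $\mu_m^E$ and the triviality $\mu_0^E = 1$ are invoked. The only small point worth noting is that the lower bound $\Mag{tK, \norm{\cdot}} \ge 1$ (which holds in general for compact positive definite spaces by the discussion following \eqref{E:mag-def}) ensures that the quantity $\Mag{tK, \norm{\cdot}} - 1$ is nonnegative, so the $\limsup$ is well defined and the inequality is nontrivial.
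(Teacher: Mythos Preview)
Your argument is correct and is exactly the intended one: the paper does not spell out a proof of this corollary, presenting it as an immediate consequence of Theorem \ref{T:hypermetric-mag} via the $m$-homogeneity of $\mu_m^E$, which is precisely what you do.
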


The following conjecture, which generalizes \cite[Conjecture
5]{MM-intrinsic}, posits that the upper bounds in Theorem
\ref{T:hypermetric-mag} are sharp to first order for small convex
sets.

\begin{conj}
  \label{C:mu1-conj}
  Suppose that $E = (\R^n, \norm{\cdot})$ is a hypermetric normed
  space, and let $K \in \mathcal{K}^n$. Then
  \[
    \lim_{t \to 0^+} \frac{\Mag{tK, \norm{\cdot}} - 1}{t} = \frac{1}{4} \mu_1^E(K).
  \]
\end{conj}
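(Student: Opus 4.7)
Since Corollary \ref{T:limsup} already supplies the matching $\limsup$ upper bound, the plan focuses on the complementary lower bound
\[
\liminf_{t\to 0^+} \frac{\Mag{tK, \norm{\cdot}} - 1}{t} \ge \frac{1}{4}\mu_1^E(K),
\]
which (to my knowledge) is open in this generality. The strategy is to exhibit, for each small $t>0$, a signed Borel measure $\mu_t$ on $K$ whose Rayleigh quotient in \eqref{E:mag-def} has first-order expansion at least $1 + \tfrac{t}{4}\mu_1^E(K) + o(t)$. The one-dimensional case is instructive: for a segment $I = [a,b]$ of length $L$ the signed weighting $w_t = \tfrac{1}{2}\delta_a + \tfrac{1}{2}\delta_b + \tfrac{t}{2}\I_I\,dx$ satisfies $\int e^{-t|x-y|}\,dw_t(y) = 1$ for all $x\in I$, giving $\Mag{I, t|\cdot|} = 1 + tL/2$; together with $\mu_1^E(I) = 2L$ from \eqref{E:l1-HT-vol} applied in the line spanned by $I$, this verifies the conjecture on line segments.

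To promote this to a general convex body, I would embed $E$ isometrically into $L_1(\Omega, \nu)$ and write $f_\omega(x) = \iota(x)(\omega)$ for the coordinate functionals. The plan is to first establish a Cauchy-type identity $\mu_1^E(K) = 2\int_\Omega \psi(K,\omega)\,d\nu(\omega)$, where $\psi(\cdot,\omega)$ records the effective 1D contribution of $K$ in the direction $\omega$; its precise form should be forced by requiring the right-hand side to define a continuous, even, translation-invariant, $1$-homogeneous valuation agreeing with $\mu_1^E$ on segments, which together with Proposition \ref{T:HT-unique} and Leinster's $\ell_1$ integral geometry \cite{Leinster-int} should pin down $\psi$. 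Using the $L_1$-identity $\norm{x-y} = \int_\Omega |f_\omega(x) - f_\omega(y)|\,d\nu(\omega)$, the magnitude quadratic form then decomposes directionally as
\[
\int\int e^{-t\norm{x-y}}\,d\mu_t\, d\mu_t = \mu_t(K)^2 - t\int_\Omega Q_\omega(\mu_t)\,d\nu(\omega) + O(t^2),
\]
where $Q_\omega(\mu_t) = \int\int |f_\omega(x) - f_\omega(y)|\,d\mu_t\,d\mu_t$ depends only on the 1D pushforward $(f_\omega)_*\mu_t$. The aim is to design $\mu_t$ so that, for each $\omega$, this pushforward approximates the 1D optimal weighting of the interval $f_\omega(K)$ to leading order, making $Q_\omega(\mu_t)/\mu_t(K)^2$ asymptotically equal to $\psi(K,\omega)/2$; integrating over $\omega$ would then yield the desired rate $\mu_1^E(K)/4$.

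The main obstacle is constructing such a $\mu_t$. A natural first attempt,
\[
\mu_t = \int_\Omega \tfrac{1}{2}(\delta_{p_\omega^+} + \delta_{p_\omega^-})\,d\nu(\omega) + \tfrac{t}{2}\sigma_K,
\]
with $p_\omega^\pm \in K$ support points in the directions $\pm f_\omega$ and $\sigma_K$ an interior measure whose pushforward under each $f_\omega$ equals Lebesgue measure on $f_\omega(K)$, runs into the difficulty that such a $\sigma_K$ generally does not exist: the family of prescribed 1D marginals is overdetermined. The content of the conjecture is that exact marginal compatibility is unnecessary---only the aggregate $\int Q_\omega(\mu_t)\,d\nu(\omega)$ needs to be asymptotically correct. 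A plausible route is to discretize $\nu$ to finitely many directions, approximate $K$ by polytopes, and build $\mu_t$ from Dirac masses at extreme points combined with a smooth interior correction, showing that the marginal discrepancies contribute only $o(t)$ in the aggregate. The class of zonoids, which admit a canonical Minkowski-integral representation and hence a natural construction of $\sigma_K$ from integrals of 1D Lebesgue measures along generating segments, appears to be the most accessible test case; already handling this class convincingly would be a meaningful first step.
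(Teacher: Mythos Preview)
There is nothing to compare against: the paper states this as an open conjecture and gives no proof. Immediately after Conjecture~\ref{C:mu1-conj} the paper records only the known special cases ($E=\ell_1^n$ with $K$ having nonempty interior, and $E=\ell_2^n$ with $n$ odd and $K=B_2^n$) and says explicitly that ``in all other cases the conjecture is open.''

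Your proposal is candid about this---you call the lower bound open and present a strategy rather than a proof---so the honest assessment is that the gap you yourself flag is the actual gap. The first-order expansion of the Rayleigh quotient and the one-dimensional check are fine, and the directional decomposition via an $L_1$-embedding is a reasonable organizing device. But the step ``design $\mu_t$ so that each pushforward $(f_\omega)_*\mu_t$ approximates the one-dimensional optimal weighting'' is exactly where the argument stops being a proof. As you note, demanding that a single measure on $K$ have prescribed one-dimensional marginals in every direction is overdetermined, and relaxing to ``only the aggregate $\int Q_\omega(\mu_t)\,d\nu(\omega)$ needs to be asymptotically correct'' is a hope, not a lemma. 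Nothing in the proposal explains why the discrepancies should cancel to $o(t)$ after integration, and the suggested discretize-and-approximate route would need a quantitative error bound that you have not supplied. Even the zonoid test case you single out is not resolved here: a Minkowski-integral representation of $K$ gives a natural candidate for $\sigma_K$, but you still have to control the cross-terms between the boundary atoms $\delta_{p_\omega^\pm}$ and $\sigma_K$ across \emph{different} directions $\omega$, and that computation is absent.

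In short, the proposal is a plausible outline but not a proof, and since the paper offers no proof either, the conjecture remains open on both sides.
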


When $E = \ell_1^n$, Conjecture \ref{C:mu1-conj} holds whenever $K$
has nonempty interior, by Theorem \ref{T:l1-mag} below.  When
$E = \ell_2^n$ the conjecture holds if $n$ is odd and $K = B_2^n$, by
\cite[Theorem 4]{MM-intrinsic}.  In all other cases the conjecture is
open, although the results of \cite{GiGo-AJM} imply that if
$E = \ell_2^n$, $n$ is odd, and $K$ has smooth boundary, then the
limit exists.

\medskip

The rest of this paper is organized as follows.  In Section
\ref{S:proof} we will prove Theorem \ref{T:hypermetric-mag}.  In
Sections \ref{S:Mahler} and \ref{S:Sudakov} we will see how Theorem
\ref{T:hypermetric-mag} quickly yields, respectively, Mahler's
conjecture for zonoids and Sudakov's minoration inequality.  Finally,
in Section \ref{S:integral} we will make some remarks about Leinster's
$\ell_1$ integral geometry, which underlies Theorem \ref{T:l1-mag},
and its relationships to both the theory of Holmes--Thompson intrinsic
volumes and the Wills functional.

\section{Proof of Theorem \ref{T:hypermetric-mag}}
\label{S:proof}

To state the theorem on which the proof of Theorem
\ref{T:hypermetric-mag} is based, we first define some additional
notation. Following \cite{Leinster-int}, for $0 \le m \le n$, we
define the \textbf{$\ell_1$ intrinsic volumes} of
$K \in \mathcal{K}^n$ by
\[
V_m'(K) = \sum_{P \in \mathrm{Gr}'_{n,m}} \vol_m(K\vert P),
\]
where $\mathrm{Gr}'_{n,m}$ denotes the set of $m$-dimensional
coordinate subspaces of $\R^n$ and $K \vert P$ denotes the orthogonal
projection of $K$ onto $P$.  (The natural class of
sets to consider here is actually larger than convex bodies, a point
that we will return to in Section \ref{S:integral}.) Note that if $K$
lies in a $d$-dimensional subspace of $\ell_1^n$, then $V_m'(K) = 0$
for $m > d$.  It follows that
\[
V_m'(K) = \frac{1}{m!} \sum_{i_1, \dots, i_m = 1}^n \vol_m
\bigl(P_{i_1, \dots, i_m} (K)\bigr),
\]
where $P_{i_1, \dots, i_m}:\R^n \to \R^m$ is the linear map represented by
the matrix whose rows are the standard basis vectors $e_{i_1}, \dots,
e_{i_m} \in \R^n$.

The following result is part of Theorem 4.6 of \cite{LeMe-survey}.

\begin{thm}
  \label{T:l1-mag}
  If $K \in \mathcal{K}^n$, then
  \[
    \Mag{K, \norm{\cdot}_1} \le \sum_{m=0}^n 2^{-m} V_m'(K),
  \]
  with equality when $K$ has nonempty interior.
\end{thm}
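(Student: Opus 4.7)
The plan is to prove Theorem~\ref{T:l1-mag} through the \emph{weighting} formulation of magnitude: a finite signed Borel measure $w$ on $K$ is a \emph{weighting} if $\int_K e^{-\|x-y\|_1}\,dw(y) = 1$ for every $x \in K$, in which case a standard Cauchy--Schwarz argument applied to the positive definite pairing $\langle\mu,\nu\rangle = \iint e^{-d(x,y)}\,d\mu(x)\,d\nu(y)$ shows $\Mag{K,\|\cdot\|_1} = w(K)$. So for $K$ with nonempty interior it suffices to construct a weighting with total mass $\sum_{m=0}^n 2^{-m} V_m'(K)$. The general inequality will then follow from the equality case by the approximation $K \subseteq K + \varepsilon B_\infty^n$, monotonicity of magnitude on positive definite spaces, and continuity of the coordinate projection volumes $\vol_m(\,\cdot\,|P)$ in the Hausdorff topology as $\varepsilon \to 0^+$.

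The one-dimensional and box cases provide the explicit model. For $K = [0,L] \subset \R$ the positive measure $w_L = \tfrac{1}{2}\,\ind{[0,L]}\,dx + \tfrac{1}{2}(\delta_0 + \delta_L)$ is immediately checked to be a weighting and gives $\Mag{[0,L]} = 1 + L/2$. For an axis-parallel box $K = \prod_{i=1}^n[a_i, a_i+L_i]$ the factorization $e^{-\|x-y\|_1} = \prod_{i=1}^n e^{-|x_i - y_i|}$ makes the tensor product $w_K = \bigotimes_{i=1}^n w_{L_i}^{(i)}$ into a weighting, yielding
\[
\Mag{K,\|\cdot\|_1} \;=\; \prod_{i=1}^n\Bigl(1+\tfrac{L_i}{2}\Bigr) \;=\; \sum_{S\subseteq[n]} 2^{-|S|}\prod_{i\in S} L_i \;=\; \sum_{m=0}^n 2^{-m}V_m'(K),
\]
using that for an axis-parallel box $V_m'(K)$ is the $m^{\mathrm{th}}$ elementary symmetric polynomial in the side lengths $L_1,\ldots,L_n$.

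The technical core is extending the construction to a general convex body $K$ with nonempty interior. The guiding principle is the one-dimensional Green-function identity $\tfrac{1}{2}(I - \partial_s^2)e^{-|s|} = \delta_0$, which makes the distribution $w_K := 2^{-n}\prod_{i=1}^n(I - \partial_{x_i}^2)\,\ind{K}$ a formal solution of $e^{-\|\cdot\|_1}\ast w_K = \ind{K}$ and hence a formal weighting of $K$. Expanding the product by subsets $S \subseteq [n]$ and interpreting via integration by parts, the $S$-term should produce a measure concentrated on the locus where the coordinates in $[n]\setminus S$ are extremal in $K$, with total mass $2^{-|S|}\vol_{|S|}(K\vert P_S)$ by a Cauchy--Crofton-type identity relating outward-normal boundary integrals to coordinate projection volumes; collecting by $m = |S|$ would give $w_K(K) = \sum_m 2^{-m}V_m'(K)$. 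The main obstacle will be making this rigorous as a genuine signed Borel measure for an arbitrary convex body: the derivatives of $\ind{K}$ are a priori boundary distributions of order up to two in each coordinate, and they must be reorganized into a finite signed measure on $K$. I would handle this by mollifying $\ind{K}$, computing the smooth weighting, and passing to the limit using uniform total-variation control afforded by convexity (and, if necessary, by first approximating $K$ by smooth convex bodies and then by polytopes for the Crofton step).
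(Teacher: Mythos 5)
Your framework (weightings, the one--dimensional weighting $\tfrac12\,dx + \tfrac12(\delta_0+\delta_L)$, tensorization for boxes, and the reduction of the inequality to the equality case via $K \subseteq K+\eps B_\infty^n$, monotonicity, and continuity of $V_m'$) is all sound, but the central step for a general convex body rests on a wrong ansatz. The distribution $w_K = 2^{-n}\prod_{i=1}^n(I-\partial_{x_i}^2)\ind{K}$ is the formal solution of $e^{-\norm{\cdot}_1}\ast w = \ind{K}$ \emph{on all of $\R^n$}, whereas a weighting only needs potential $1$ \emph{on} $K$; the true potential of a weighting is strictly positive (exponentially decaying) outside $K$, not zero. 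Forcing the potential to vanish off $K$ is exactly what makes $w_K$ a higher-order distribution rather than a measure: already for $n=1$, $\tfrac12(I-\partial^2)\ind{[0,L]}$ equals $\tfrac12\ind{[0,L]}\,dx$ plus derivative-of-delta terms at the endpoints, and is \emph{not} your (correct) weighting $\tfrac12\,dx+\tfrac12(\delta_0+\delta_L)$. Worse, the bookkeeping cannot come out as you claim even formally: every term in the expansion of $\prod_i(I-\partial_{x_i}^2)\ind{K}$ containing at least one $\partial_{x_i}^2$ has total mass zero (pair with a function $\equiv 1$ near $K$), so the total mass of your $w_K$ is $2^{-n}\vol_n(K)$ — only the top term — not $\sum_m 2^{-m}V_m'(K)$; in one dimension it gives $L/2$ instead of $1+L/2$. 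Mollifying $\ind{K}$ inherits both defects: the total variations blow up like the inverse mollification parameter and the total masses converge to the wrong value, and convexity does not repair this. The correct formal principle is $w = 2^{-n}\prod_i(I-\partial_{x_i}^2)h$ where $h$ is the \emph{potential}: $h\equiv 1$ on $K$ and, outside $K$, a decaying solution of the (non-elliptic, product-type) equation $\prod_i(I-\partial_{x_i}^2)h=0$ matched across $\partial K$ so that $w$ is a genuine measure. For a box $h$ is the product of the one-dimensional potentials — not $\ind{K}$ — which is precisely how your tensor weighting arises. Constructing such an $h$ for an arbitrary convex body, showing the associated $w$ is a finite signed measure, and computing its mass is the real content of the theorem, and it is absent from the proposal.

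Note also that the paper you are working from does not prove this statement: it is quoted as part of Theorem 4.6 of \cite{LeMe-survey}. The argument there does not proceed by a distributional ansatz for general bodies; it builds on explicit weightings for boxes together with inclusion--exclusion for weightings and the valuation property of the $\ell_1$ intrinsic volumes on the larger class of $\ell_1$-convex sets — which is exactly why Section \ref{S:integral} of the paper stresses that $V_m'$ satisfying \eqref{E:inclusion-exclusion} on all $\ell_1$-convex sets is crucial to the proof of Theorem \ref{T:l1-mag} even when the theorem is applied only to convex bodies. If you want to pursue your PDE-flavored route, you would in effect be redoing for the operator $\prod_i(I-\partial_{x_i}^2)$ the kind of exterior boundary-value analysis carried out in the Euclidean case by Barcel\'o--Carbery and Gimperlein--Goffeng, which is substantially harder than what is sketched here.
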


To deduce Theorem \ref{T:hypermetric-mag} from Theorem \ref{T:l1-mag},
we approximate a hypermetric normed space $E$ by a sequence of
$n$-dimensional subspaces $E_N \subseteq \ell_1^N$.  To do this we use
the following fact, which goes back to L\'evy; see e.g.\ \cite[Section
6.1]{Koldobsky}.

\begin{prop}
  \label{T:generating-measure}
  A finite-dimensional normed space $E = (\R^n, \norm{\cdot})$ is
  hypermetric if and only if there exists an even nonnegative measure
  $\rho$ on $S^{n-1}$ such that
  \[
    \norm{x} = \int \abs{\inprod{x}{y}} \ d\rho(y)
  \]
  for all $x \in \R^n$.
\end{prop}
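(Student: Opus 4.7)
The plan is to leverage the equivalence, cited in the introduction, between a finite-dimensional normed space being hypermetric and being isometrically isomorphic to a subspace of $L_1$. With this classical input in hand, both directions of the proposition reduce to routine measure-theoretic manipulations.

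For the easy direction ($\Leftarrow$), if such a measure $\rho$ exists, I would observe that the linear map $T \colon \R^n \to L_1(S^{n-1}, \rho)$ defined by $(Tx)(y) = \inprod{x}{y}$ satisfies
\[
  \norm{Tx}_{L_1(\rho)} = \int_{S^{n-1}} \abs{\inprod{x}{y}} \, d\rho(y) = \norm{x},
\]
so that $T$ is an isometric embedding of $E$ into $L_1$, whence $E$ is hypermetric.

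For the nontrivial direction ($\Rightarrow$), I would fix an isometric embedding of $E$ into some $L_1(\Omega, \mu)$ and let $f_1, \dots, f_n \in L_1(\Omega,\mu)$ be the images of the standard basis. Writing $F(\omega) = (f_1(\omega), \dots, f_n(\omega)) \in \R^n$, this reads
\[
  \norm{x} = \int_\Omega \abs{\inprod{x}{F(\omega)}} \, d\mu(\omega).
\]
On $\{F \ne 0\}$, I would factor $F(\omega) = g(\omega)\theta(\omega)$ with $g(\omega) = \norm{F(\omega)}_2$ and $\theta(\omega) \in S^{n-1}$, pull $g$ out of the integrand using $1$-homogeneity of the absolute value, and push the finite nonnegative measure $g\,d\mu$ forward under $\theta$ to obtain a finite nonnegative measure $\rho_0$ on $S^{n-1}$ with $\norm{x} = \int_{S^{n-1}} \abs{\inprod{x}{y}}\,d\rho_0(y)$. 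Finiteness of $g\,d\mu$ is automatic from $g \le \sum_i \abs{f_i}$ and $f_i \in L_1(\mu)$. Finally, since $y \mapsto \abs{\inprod{x}{y}}$ is even, I would symmetrize via $\rho = \tfrac{1}{2}(\rho_0 + \iota_\ast \rho_0)$, where $\iota(y) = -y$, without altering the integral; this yields the desired even $\rho$.

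The main obstacle is not internal to this argument but is the classical fact that hypermetricity of a finite-dimensional normed space is equivalent to isometric embeddability into $L_1$. This is the substantive input, which the author imports from \cite[Section 6.1]{DeLa} rather than reproving. An alternative path uses Bochner's theorem applied to the positive definite function $e^{-\norm{x}}$ together with the Lévy--Khintchine description of symmetric $1$-stable characteristic functions (the original approach of Lévy referenced in Koldobsky's book), but once one is willing to invoke the $L_1$-embedding equivalence, the pushforward argument above is more elementary and finite-dimensional in character.
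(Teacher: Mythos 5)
Your argument is correct, but note that the paper does not actually prove this proposition: it imports it wholesale as a classical fact going back to L\'evy, with a pointer to Section 6.1 of Koldobsky's book. What you have done is reduce it to a \emph{different} classical import --- the equivalence (also cited in the paper's introduction, from Deza--Laurent) between hypermetricity of a normed space and isometric embeddability into $L_1$ --- and then supply the short measure-theoretic bridge between ``$E$ embeds in $L_1$'' and ``the norm has a cone-measure representation $\norm{x}=\int_{S^{n-1}}\abs{\inprod{x}{y}}\,d\rho(y)$.'' Both directions of your bridge are sound: in the easy direction, $(Tx)(y)=\inprod{x}{y}$ does define a linear isometry of $E$ into $L_1(S^{n-1},\rho)$ (integrability of $Tx$ is exactly the finiteness of $\norm{x}$); in the nontrivial direction, writing the embedding through the images $f_1,\dots,f_n$ of the basis, factoring $F=g\theta$ off the zero set, pushing forward the finite measure $g\,d\mu$ (finite because $g\le\sum_i\abs{f_i}$), and symmetrizing with $\iota(y)=-y$ is the standard construction and loses nothing, since $y\mapsto\abs{\inprod{x}{y}}$ is even. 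So your proof is essentially the textbook derivation that underlies the citation, made explicit; what it buys is a self-contained statement-level argument modulo the single deep input (the $L_1$-embedding characterization of hypermetric norms), whereas the paper simply cites the representation theorem directly. Neither you nor the paper reproves that deep input, and you are right to flag it as the substantive content.
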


From the perspective of convex geometry, Proposition
\ref{T:generating-measure} implies that $E$ is hypermetric if and only
if $B^\circ$, the polar body of the unit ball of $E$, is a zonoid (see
\cite[Theorem 3.5.3]{Schneider}). In that setting $\rho$ is referred
to as the \textbf{generating measure} of $B^\circ$; we will also refer
to it as the generating measure for $E$. In \cite{ScWi}, Schneider and
Wieacker investigated Holmes--Thompson intrinsic volumes for
hypermetric normed spaces with the help of generating measures.  We
will use the following expression they derived (see \cite[formula
(64)]{ScWi}).

\begin{prop}
  \label{T:L1-HT-iv}
  Suppose that $E = (\R^n, \norm{\cdot})$ is a hypermetric normed
  space with generating measure $\rho$. Then
  \begin{align*}
    \mu_m^E(K) & = c_m \int_{(S^{n-1})^m} \vol_m \bigl(K \vert \lin{x_1,
                 \dots, x_m}\bigr) \sqrt{\det (A A^t)} \ d\rho(x_1) \cdots
                 d\rho(x_m) \\
               & = c_m \int_{(S^{n-1})^m} \vol_m (A K) \ d\rho(x_1) \cdots
                 d\rho(x_m)
  \end{align*}
  where $c_m$ depends only on $m$.  Here $\lin{x_1,\dots x_n}$ denotes
  the linear span of $x_1,\dots, x_m \in \R^n$ and $A$ is the $m \times n$ matrix
  with rows $x_1, \dots, x_m$.
\end{prop}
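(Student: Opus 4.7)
My plan is to invoke the uniqueness clause of Proposition \ref{T:HT-unique}. Define
\[
  \nu_m(K) = c_m \int_{(S^{n-1})^m} \vol_m\bigl(K \vert \lin{x_1, \dots, x_m}\bigr) \sqrt{\det(AA^t)} \, d\rho(x_1) \cdots d\rho(x_m),
\]
with $c_m$ a universal constant to be pinned down. I would verify that $\nu_m$ is an even, continuous, $m$-homogeneous, translation-invariant convex valuation on $\mathcal{K}^n$, and then show that $\nu_m$ agrees with $\vol_{\mathrm{HT}}^F$ on every $m$-dimensional $K$ spanning an affine subspace $F$. Uniqueness then forces $\nu_m = \mu_m^E$.

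The formal properties are essentially mechanical. For fixed $(x_1, \dots, x_m)$, the map $K \mapsto \vol_m(K \vert P)$ with $P = \lin{x_1, \dots, x_m}$ is a continuous, $m$-homogeneous, translation-invariant convex valuation, since projection onto a fixed subspace is linear and $\vol_m$ has all these properties. They are preserved by integration against the finite even measure $\rho^{\otimes m}$ (continuity via dominated convergence with the uniform bound $\vol_m(K \vert P) \le \diam(K)^m$ on bounded families), and evenness follows from the evenness of $\rho$. The second equality in the proposition is a linear-algebra identity: when $x_1, \dots, x_m$ are independent, $AK = A(K \vert P)$ and the restriction $A|_P \cln P \to \R^m$ has Jacobian $\sqrt{\det(AA^t)}$ by Cauchy--Binet; both sides vanish in the dependent case.

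The substantive step is normalization on an $m$-dimensional $K$ lying in a linear subspace $F_0$. Applying \eqref{E:HT-volume} inside $F_0$ and using the standard identity $(B \cap F_0)^\circ_{F_0} = \pi_{F_0}(B^\circ)$, one obtains $\vol_{\mathrm{HT}}^{F_0}(K) = \vol_m(K) \cdot \vol_m\bigl(\pi_{F_0}(B^\circ)\bigr)$. By Proposition \ref{T:generating-measure}, $B^\circ$ is a zonoid with generating measure $\rho$, so $\pi_{F_0}(B^\circ)$ is a zonoid in $F_0$ with generating measure $(\pi_{F_0})_\ast \rho$, and the classical zonoid-volume formula expresses $\vol_m(\pi_{F_0}(B^\circ))$ as an integral of $\bigl|\det\nolimits_{F_0}(\pi_{F_0} x_1, \dots, \pi_{F_0} x_m)\bigr|$ against $\rho^{\otimes m}$. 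The geometric identity
\[
  \vol_m(K \vert P) \sqrt{\det(AA^t)} = \vol_m(K) \cdot \bigl|\det\nolimits_{F_0}(\pi_{F_0} x_1, \dots, \pi_{F_0} x_m)\bigr|,
\]
in which both sides compute the $m$-volume of the image $A K \subseteq \R^m$, then converts the defining integral of $\nu_m(K)$ into $\vol_m(K) \cdot \vol_m(\pi_{F_0}(B^\circ))$ up to an $m$-dependent constant, pinning down $c_m$.

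The main obstacle is exactly this final bookkeeping: carefully aligning the normalizations in \eqref{E:HT-volume}, the generating measure $\rho$ from Proposition \ref{T:generating-measure}, and the classical zonoid-volume formula, and verifying the cosine-of-angle identity displayed above that reduces $\vol_m(K \vert P) \sqrt{\det(AA^t)}$ to the projected Gram determinant on an $m$-dimensional $K \subseteq F_0$. Once this identity is in hand, $c_m$ is a universal constant depending only on $m$, which can be fixed once and for all by testing in $E = \ell_2^n$ (where $\rho$ is uniform on $S^{n-1}$) with $K$ a Euclidean ball or coordinate cube, evaluating both sides in closed form.
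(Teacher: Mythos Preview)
The paper does not prove this proposition at all: it is quoted from Schneider and Wieacker, with the attribution ``the following expression they derived (see \cite[formula (64)]{ScWi})'' immediately preceding the statement. So there is no in-paper proof to compare against; the proposition functions here as a citation.

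Your outline is nonetheless a correct and natural way to establish the formula independently, and is in the spirit of how such results are proved in \cite{ScWi}: the integrand is, for each fixed $(x_1,\dots,x_m)$, a continuous even translation-invariant $m$-homogeneous valuation, integration preserves these properties, and the uniqueness clause of Proposition~\ref{T:HT-unique} reduces everything to checking the normalization on $m$-dimensional bodies via the zonoid volume formula. The geometric identity you state does hold, since both sides equal $\vol_m(AK)$. One minor point: rather than pinning down $c_m$ by testing in $\ell_2^n$, the paper does this in the very next result (Corollary~\ref{T:l1-HT-iv}) by testing in $\ell_1^n$, where the generating measure is discrete and the computation is a short combinatorial count yielding $c_m = 2^m/m!$; you may find that route cleaner than evaluating spherical integrals.
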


Since we are using a different normalization convention than
\cite{ScWi}, the value of $c_m$ here differs from the one stated
in \cite{ScWi}. The proof of the following corollary shows that for
our normalization, $c_m = \frac{2^m}{m!}$.

\begin{cor}
  \label{T:l1-HT-iv}
  For $K \in \mathcal{K}^n$, \( \mu_m^{\ell_1^n}(K) = 2^m V_m'(K). \)
\end{cor}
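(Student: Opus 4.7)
The plan is to apply Proposition~\ref{T:L1-HT-iv} directly to $\ell_1^n$ after identifying its generating measure. Since $\norm{x}_1 = \sum_{i=1}^n \abs{\inprod{x}{e_i}}$, we have
\[
\norm{x}_1 = \int_{S^{n-1}} \abs{\inprod{x}{y}} \, d\rho(y)
\]
for the atomic measure $\rho = \frac{1}{2} \sum_{i=1}^n (\delta_{e_i} + \delta_{-e_i})$ on $S^{n-1}$, which is therefore the generating measure for $\ell_1^n$.

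Substituting this $\rho$ into Proposition~\ref{T:L1-HT-iv} turns the integral into a weighted sum over tuples $(x_1, \ldots, x_m)$ with each $x_j = \sigma_j e_{i_j}$ for some $\sigma_j \in \{\pm 1\}$ and $i_j \in \{1, \ldots, n\}$, each contributing weight $2^{-m}$. For such a tuple the matrix $A$ acts by $(Ax)_j = \sigma_j x_{i_j}$, so $AK$ is the image of the coordinate projection $P_{i_1, \ldots, i_m}(K)$ under a diagonal sign-flip isometry of $\R^m$. Hence $\vol_m(AK) = \vol_m\bigl(P_{i_1, \ldots, i_m}(K)\bigr)$ independently of the signs $\sigma_j$, and this quantity vanishes whenever some index $i_j$ is repeated (since then $A$ has rank less than $m$, matching the fact that $P_{i_1,\ldots,i_m}(K)$ lies in a subspace of dimension less than $m$). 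Summing the $2^m$ sign choices cancels the factor $2^{-m}$, leaving
\[
\mu_m^{\ell_1^n}(K) = c_m \sum_{(i_1, \ldots, i_m) \in \{1, \ldots, n\}^m} \vol_m\bigl(P_{i_1, \ldots, i_m}(K)\bigr) = c_m \cdot m! \cdot V_m'(K)
\]
by the second expression for $V_m'$ given in Section~\ref{S:proof}.

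It remains to identify $c_m$, which I would do by testing the identity on the unit cube $K = [0,1]^m$ inside $\ell_1^m$. By \eqref{E:l1-HT-vol} we have $\mu_m^{\ell_1^m}([0,1]^m) = \vol_{\mathrm{HT}}^{\ell_1^m}([0,1]^m) = 2^m$, while $V_m'([0,1]^m) = \vol_m([0,1]^m) = 1$ since $\R^m$ is its only $m$-dimensional coordinate subspace. Comparing yields $c_m = 2^m / m!$, and then $\mu_m^{\ell_1^n}(K) = 2^m V_m'(K)$ as claimed. The argument is essentially bookkeeping, with no substantial obstacle: the only points to check carefully are that sign flips preserve the volume, that repeated-index tuples contribute zero on both sides, and that a single well-chosen example suffices to pin down the $m$-dependent normalization constant.
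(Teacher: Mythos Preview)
Your proof is correct and follows essentially the same approach as the paper: identify the generating measure for $\ell_1^n$, substitute into Proposition~\ref{T:L1-HT-iv} to obtain $\mu_m^{\ell_1^n}(K) = m!\, c_m V_m'(K)$, and then pin down $c_m$ by evaluating both sides on a set in an $m$-dimensional coordinate subspace. The only cosmetic differences are that the paper uses the first (projection-plus-Jacobian) form of Proposition~\ref{T:L1-HT-iv} rather than the second, and for the normalization step appeals to a general $K$ in a coordinate subspace via \cite[Lemma~5.2]{Leinster-int} rather than the specific cube $[0,1]^m$; your choice of the concrete cube is arguably more self-contained.
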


\begin{proof}
  The generating measure for $\ell_1^n$ is
  $\rho = \frac{1}{2} \sum_{i=1}^n (\delta_{e_i} + \delta_{-e_i})$. If
  $x_1, \dots, x_m \in \{\pm e_1, \dots, \pm e_n\}$, then
  $\sqrt{\det (A A^t)} = 1$ if $\lin{x_1, \dots, x_m}$ is
  $m$-dimensional, and is $0$ otherwise. Proposition \ref{T:L1-HT-iv}
  therefore implies that
  \begin{align*}
    \mu_m^{\ell_1^n}(K) & = c_m \int_{(S^{n-1})^m} \vol_m \bigl(K \vert \lin{x_1, \dots,
    x_m}\bigr) \sqrt{\det (A A^t)} \ d\rho(x_1) \cdots d\rho(x_m) \\
    & = 2^{-m} c_m \sum_{j_1, \dots, j_m = 1}^n
      \sum_{\eps_1, \dots, \eps_m \in \{1, -1\}} \vol_m \bigl(K \vert
      \lin{\eps_1 e_{j_1}, \dots, \eps_m e_{j_m}}\bigr) \\
    & = c_m \sum_{j_1, \dots, j_m = 1}^n
      \vol_m \bigl(K \vert
      \lin{e_{j_1}, \dots, e_{j_m}}\bigr) \\
    & = m! c_m V_m'(K).
  \end{align*} 
  Now if $E \subseteq \ell_1^n$ is an $m$-dimensional coordinate
  subspace, then $E$ is isometrically isomorphic to $\ell_1^m$, and
  when $K \subseteq E$ we have
  \[
    \mu_m^{\ell_1^n}(K) = \vol_{\mathrm{HT}}^E(K) = 2^m \vol_m(K) =
    2^m V_m'(K)
  \]
  by \eqref{E:l1-HT-vol} and \cite[Lemma 5.2]{Leinster-int}, and so
  $c_m = \frac{2^m}{m!}$.
\end{proof}

Corollary \ref{T:l1-HT-iv} shows in particular that when
$E = \ell_1^n$, the first inequality in Theorem
\ref{T:hypermetric-mag} reduces to Theorem \ref{T:l1-mag}.  It also
implies the following generalization of \cite[Lemma
5.2]{Leinster-int}.

\begin{cor}
  \label{T:Vi'-subspace}
  If $K \in \mathcal{K}^n$ lies in an $m$-dimensional subspace $E
  \subseteq \R^n$, then
  \[
    V_m'(K) = \frac{\vol_m\bigl( (B_1^n \cap E)^\circ\bigr)}{2^m} \vol_m(K),
  \]
  where the polar body $(B_1^n \cap E)^\circ$ is considered in the
  subspace $E$.
\end{cor}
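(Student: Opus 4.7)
The plan is to deduce this from Corollary \ref{T:l1-HT-iv} and the defining property of the Holmes--Thompson intrinsic volume given in Proposition \ref{T:HT-unique}, which together reduce the statement to a straightforward unpacking of the definition \eqref{E:HT-volume} of Holmes--Thompson volume in the subspace $E$.

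First I would dispose of the degenerate case: if $K$ has affine dimension strictly less than $m$, then every orthogonal projection of $K$ onto an $m$-dimensional coordinate subspace has $\vol_m = 0$, so $V_m'(K) = 0$, while simultaneously $\vol_m(K) = 0$, so the identity holds trivially. Hence I may assume that $K$ is $m$-dimensional with affine span equal to a translate of $E$. Since both $V_m'$ (defined via orthogonal projections) and the Holmes--Thompson intrinsic volume $\mu_m^{\ell_1^n}$ (which is translation-invariant by Proposition \ref{T:HT-unique}) are unchanged by translating $K$, I may further assume that the affine span of $K$ is $E$ itself.

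Now Corollary \ref{T:l1-HT-iv} gives
\[
V_m'(K) = 2^{-m} \mu_m^{\ell_1^n}(K),
\]
so it suffices to compute $\mu_m^{\ell_1^n}(K)$. Since $K \subseteq E$ is $m$-dimensional with affine span $E$, the defining property in Proposition \ref{T:HT-unique} yields $\mu_m^{\ell_1^n}(K) = \vol_{\mathrm{HT}}^E(K)$, where $E$ is regarded as an $m$-dimensional hypermetric normed space with the norm inherited from $\ell_1^n$, whose unit ball is $B_1^n \cap E$. Applying the definition \eqref{E:HT-volume} of Holmes--Thompson volume in $E$ gives
\[
\vol_{\mathrm{HT}}^E(K) = \vol_{2m}\bigl(K \times (B_1^n \cap E)^\circ\bigr),
\]
where the polar is taken in $E$ with respect to the inner product inherited from the Euclidean structure on $\R^n$. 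Because $K$ and $(B_1^n \cap E)^\circ$ live in the $m$-dimensional spaces $E$ and $E^* \cong E$ respectively, the symplectic volume $\vol_{2m}$ factors as a product of $m$-dimensional Lebesgue volumes, giving
\[
\vol_{\mathrm{HT}}^E(K) = \vol_m(K)\, \vol_m\bigl((B_1^n \cap E)^\circ\bigr).
\]
Combining these identities yields the claim. I do not anticipate any serious obstacle: the only point requiring care is ensuring the normalization conventions for $\vol_{\mathrm{HT}}^E$ and the polar in the subspace are consistent with those fixed earlier in the paper, so that no spurious Euclidean factor (such as $\omega_m$) appears.
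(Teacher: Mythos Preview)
Your proposal is correct and is precisely the argument the paper has in mind: the paper states this corollary immediately after Corollary \ref{T:l1-HT-iv} and simply remarks that it follows from the latter, without giving an explicit proof. Your write-up is the natural unpacking of that implication via Proposition \ref{T:HT-unique} and the definition \eqref{E:HT-volume}; the only superfluous step is the translation reduction, since if $K$ lies in the linear subspace $E$ and has dimension $m = \dim E$, its affine span is already $E$.
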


For the second inequality in Theorem \ref{T:hypermetric-mag} we will
need the following generalization of \cite[Lemma 3.2]{LeMe-extremal}.

\begin{lemma}
  \label{T:mu1-ub}
  If $E = (\R^n, \norm{\cdot})$ is a hypermetric normed space and
  $K \in \mathcal{K}^n$, then
  $\mu_{i+j}^E(K) \le \frac{i! j!}{(i+j)!} \mu_i^E(K) \mu_j^E(K)$ for
  all $i, j \ge 0$. Consequently
  $\mu_m^E(K) \le \frac{1}{m!} (\mu_1^E(K))^m$ for each $1\le m \le n$.
\end{lemma}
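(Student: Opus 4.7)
The plan is to apply the integral representation of Proposition \ref{T:L1-HT-iv} and factor the integral using Fubini, after replacing the $(i+j)$-dimensional volume by a product of an $i$- and a $j$-dimensional volume via a straightforward projection inequality.

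Concretely, let $\rho$ be a generating measure for $E$, and set $N = i+j$. For points $x_1, \dots, x_N \in S^{n-1}$ let $A$ be the $N \times n$ matrix with rows $x_1, \dots, x_N$, and let $A_1$, $A_2$ be its $i \times n$ and $j \times n$ blocks consisting of the first $i$ rows and the last $j$ rows respectively. Identifying $\R^N$ with $\R^i \times \R^j$ in the obvious way, the projections onto the two factors carry $AK$ to $A_1 K$ and $A_2 K$, so $AK \subseteq (A_1 K) \times (A_2 K)$ and hence
\[
  \vol_N(A K) \le \vol_i(A_1 K)\, \vol_j(A_2 K).
\]
This is the one geometric input; everything else is bookkeeping, and this inequality (inclusion in the product of projections) is essentially trivial for convex bodies once the coordinates are split.

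Now, using Proposition \ref{T:L1-HT-iv} with $c_m = \frac{2^m}{m!}$ (as determined by the proof of Corollary \ref{T:l1-HT-iv}), integrating the above inequality against $d\rho(x_1)\cdots d\rho(x_N)$ and applying Fubini gives
\begin{align*}
  \mu_N^E(K) &= \frac{2^N}{N!}\int_{(S^{n-1})^N} \vol_N(A K)\, d\rho(x_1)\cdots d\rho(x_N)\\
  &\le \frac{2^N}{N!}\left(\int \vol_i(A_1 K)\, d\rho(x_1)\cdots d\rho(x_i)\right)\left(\int \vol_j(A_2 K)\, d\rho(x_{i+1})\cdots d\rho(x_N)\right)\\
  &= \frac{2^N}{N!}\cdot\frac{i!}{2^i}\mu_i^E(K)\cdot\frac{j!}{2^j}\mu_j^E(K) = \frac{i!\,j!}{(i+j)!}\mu_i^E(K)\mu_j^E(K),
\end{align*}
which is the first inequality of the lemma.

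The second inequality follows by induction on $m$. The case $m=1$ is trivial, and if the bound holds at $m-1$ then taking $i=1$, $j=m-1$ in the just-proved inequality yields
\[
  \mu_m^E(K) \le \frac{1}{m}\mu_1^E(K)\mu_{m-1}^E(K) \le \frac{1}{m}\cdot\frac{1}{(m-1)!}(\mu_1^E(K))^m = \frac{1}{m!}(\mu_1^E(K))^m.
\]
The only step that requires any thought is verifying the product-of-projections containment $AK \subseteq A_1 K \times A_2 K$, and that the generating measure hypothesis is precisely what lets us write $\mu_m^E$ as a single integral over $(S^{n-1})^m$ so that Fubini can factor the product bound cleanly — both of which are essentially immediate, so no substantial obstacle is anticipated.
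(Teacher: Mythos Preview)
Your proof is correct and follows essentially the same approach as the paper: both use the containment $AK \subseteq A_1 K \times A_2 K$, plug this into the integral representation of Proposition \ref{T:L1-HT-iv} with $c_m = 2^m/m!$, factor via Fubini, and then deduce the second claim by induction using the case $i=1$.
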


\begin{proof}
  Let $x_1, \dots, x_{i+j} \in \R^n$.  Writing $A$ for the matrix with
  rows $x_1, \dots, x_{i+j}$, $A_1$ for the matrix with rows
  $x_1, \dots, x_i$, and $A_2$ for the matrix with rows
  $x_{i+1}, \dots, x_j$, we have $A K \subseteq A_1 K \times A_2
  K$. Proposition \ref{T:L1-HT-iv} then implies that
  \begin{align*}
    \mu_{i+j}^E(K) & 
    \le \frac{2^{i+j}}{(i+j)!} \int_{(S^{n-1})^{i+j}} \vol_i (A_1 K)
      \vol_j(A_2 K) \ d\rho(x_1) \cdots d\rho(x_{i+j}) \\
    & = \frac{i! j!}{(i+j)^!} \mu_i^E(K) \mu_j^E(K).
  \end{align*}
  This implies in particular that
  $\mu_{j+1}^E(K) \le \frac{1}{j+1} \mu_1^E(K) \mu_j^E(K)$ for each $j$, and the
  second claim now follows by induction.
\end{proof}

We are now ready to prove the main result.

\begin{proof}[Proof of Theorem \ref{T:hypermetric-mag}]
  Let $\rho$ be the generating measure for $E$.  We can
  approximate $\rho$ by a sequence of discrete measures
\[
\rho_N = \sum_{j=1}^{N} w_{N,j} \delta_{\theta_{N,j}}
\]
with $w_{N,j} > 0$ and $\theta_{N,j} \in S^{n-1}$.  For each $N$ we
get a seminorm, which for sufficiently large $N \ge n$ will be a norm,
given by 
\[
  \norm{x}_{E_N} = \int_{S^{n-1}} \abs{\inprod{x}{y}} \ d\rho_N(y) =
  \sum_{j=1}^m w_{N,j} \abs{\inprod{x}{\theta_{N,j}}} = \sum_{j=1}^N
  \abs{\inprod{x}{w_{N,j} \theta_{N,j}}}.
\]
We write $E_N = (\R^n, \norm{\cdot}_{E_N})$.  Define $T_N:\R^N \to \R^n$
by $T_N(e_j) = w_{N,j} \theta_{N,j}$.  Then
\[
  \norm{T^*_N(x)}_1 = \sum_{j=1}^N \abs{\inprod{T_N^*(x)}{e_j}}
  = \sum_{j=1}^N \abs{\inprod{x}{T_N(e_j)}} = \norm{x}_{E_N},
\]
so $T_N^*$ is an isometric embedding of $E_N$ into $\ell_1^N$.

We have
\[
  \norm{x}_{E_N} = \int_{S^{n-1}} \abs{\inprod{x}{y}} \ d\rho_N(y)
  \xrightarrow{N \to \infty} \int_{S^{n-1}} \abs{\inprod{x}{y}} \
  d\rho(y) = \norm{x}.
\]
This implies that $(K, \norm{\cdot}_{E_N}) \to (K, \norm{\cdot})$ in
the Gromov--Hausdorff metric (see \cite[Section 3.A]{Gromov}). Since
magnitude is lower semicontinuous with respect to the
Gromov--Hausdorff metric on the class of compact positive definite
metric spaces \cite[Theorem 2.6]{MM-pdms}, it follows that
\begin{equation}
  \label{E:liminf}
  \Mag{K, \norm{\cdot}} \le \liminf_{N \to \infty} \Mag{K, \norm{\cdot}_{E_N}} = \liminf_{N \to
    \infty} \Mag{T_N^* (K), \norm{\cdot}_1}.
\end{equation}

Now
\begin{equation}
  \label{E:Vi'-limit}
  \begin{split}
  V_m'(T_N^*(K))
  & = \frac{1}{m!} \sum_{i_1, \dots, i_m = 1}^N
    \vol_m \bigl(P_{i_1, \dots, i_m} T_N^*(K) \bigr) \\
  & = \frac{1}{m!} \sum_{i_1, \dots, i_m = 1}^N w_{N,i_1} \cdots w_{N,i_N}
    \vol_m \left( \begin{bmatrix} \theta_{N,i_1} \\ \vdots
        \\ \theta_{N,i_m} \end{bmatrix} K \right) \\
  & = \frac{1}{m!} \int \dots \int \vol_m
    (A K ) \ d\rho_N(x_1) \cdots d\rho_N(x_m) \\
  & \xrightarrow{N \to \infty}
    \frac{1}{m!} \int \dots \int \vol_m (A K )
  \ d\rho(x_1) \cdots d\rho(x_m) \\
  & = 2^{-m} \mu_m^{E}(K),
  \end{split}
\end{equation}
where the last equality follows from Proposition \ref{T:L1-HT-iv}.
Here $A$ as before stands for the matrix with rows $x_1, \dots, x_m$,
and the matrix in the second row has rows
$\theta_{N,i_1}, \dots, \theta_{N,i_m}$.  The first inequality in
Theorem \ref{T:hypermetric-mag} now follows by combining
\eqref{E:liminf}, Theorem \ref{T:l1-mag}, and \eqref{E:Vi'-limit}. The
second inequality then follows by Lemma \ref{T:mu1-ub}.
\end{proof}

\section{Application: Mahler's conjecture for zonoids}
\label{S:Mahler}

In this and the following section, we will see how two important
results in convex geometry which make no reference to magnitude
quickly follow by combining the upper bound on magnitude from Theorem
\ref{T:hypermetric-mag} with easy lower bounds.

Mahler conjectured in 1939 \cite{Mahler} that if $K \in \mathcal{K}^n$
is symmetric with nonempty interior, then
\[
  \vol_n(K) \vol_n(K^\circ) \ge \frac{4^n}{n!}.
\]
Equality is attained (nonuniquely) for $K = B_1^n$ or
$K = B_\infty^n$. This has been proved in various special cases and in
asymptotic forms (see \cite{IrSh} for a proof when $n=3$ and further
references) but the general case remains open.

In the proof of the following result, we compare the top-order
behavior of the first upper bound on magnitude in Theorem
\ref{T:hypermetric-mag}, which is typically not sharp for large convex
bodies, with a lower bound that is known to be asymptotically sharp.
This comparison immediately implies Mahler's conjecture for zonoids,
first proved in \cite{Reisner-rpvp,Reisner-zonoids} (see also
\cite{GoMeRe}).

\begin{cor}
  \label{T:Mahler-zonoid}
  If $Z \in \mathcal{K}^n$ is an $n$-dimensional zonoid, then 
  \[
    \vol_n(Z) \vol_n(Z^\circ) \ge \frac{4^n}{n!}.
  \]
\end{cor}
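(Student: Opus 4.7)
The plan is to apply Theorem~\ref{T:hypermetric-mag} in the normed space $E = (\R^n, \norm{\cdot})$ whose unit ball is $B = Z^\circ$, extract the top-order term of the resulting upper bound on $\Mag{tZ^\circ, \norm{\cdot}}$, and pair it with an elementary lower bound that grows at the same rate in $t$. This $E$ is well-defined because $Z$ is $n$-dimensional and (as a zonoid) centrally symmetric, so $Z^\circ$ is a centrally symmetric convex body with nonempty interior; and $B^\circ = Z$ is a zonoid by hypothesis, so by Proposition~\ref{T:generating-measure} the space $E$ is hypermetric. Applied to the dilate $tZ^\circ$ with $t > 0$, Theorem~\ref{T:hypermetric-mag} combined with the $m$-homogeneity of $\mu_m^E$ from Proposition~\ref{T:HT-unique} gives
\[
  \Mag{tZ^\circ, \norm{\cdot}} \le \sum_{m=0}^n 4^{-m} t^m \mu_m^E(Z^\circ).
\]
Since $Z^\circ$ is $n$-dimensional, $\mu_n^E(Z^\circ) = \vol_{\mathrm{HT}}^E(Z^\circ)$, and \eqref{E:HT-volume} identifies this as $\vol_n(Z^\circ)\vol_n(B^\circ) = \vol_n(Z^\circ)\vol_n(Z)$, so the leading coefficient in $t$ of the upper bound is $4^{-n}$ times the Mahler product.

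For the matching lower bound, I will plug $\mu$ equal to Lebesgue measure restricted to $tZ^\circ$ into the variational formula \eqref{E:mag-def}. The numerator is $t^{2n}\vol_n(Z^\circ)^2$; for the denominator, translation invariance and enlargement of the inner integral to all of $\R^n$ (legitimate since the integrand is nonnegative) give
\[
  \int_{tZ^\circ}\int_{tZ^\circ} e^{-\norm{x-y}}\,dx\,dy
  \le \vol_n(tZ^\circ) \int_{\R^n} e^{-\norm{u}}\,du.
\]
A one-line radial-integration computation evaluates the last integral as $n!\,\vol_n(B) = n!\,\vol_n(Z^\circ)$, and combining yields
\[
  \Mag{tZ^\circ, \norm{\cdot}} \ge \frac{t^n}{n!}
\]
for every $t > 0$.

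To conclude, I divide both bounds by $t^n$ and send $t \to \infty$: the lower side is $1/n!$ while only the $m = n$ term on the upper side survives, tending to $4^{-n}\vol_n(Z)\vol_n(Z^\circ)$; the inequality $\vol_n(Z)\vol_n(Z^\circ) \ge 4^n/n!$ then follows. The only real choice in the argument is the initial selection of $E$: the norm must be the one whose unit ball is $Z^\circ$ (not $Z$), both so that the hypermetric hypothesis is satisfied and so that applying $\mu_n^E$ to the unit ball recovers the Mahler product on the nose. Once this is set up, the well-known non-sharpness of Theorem~\ref{T:hypermetric-mag} for large bodies is irrelevant: only the $t^n$ coefficient is used, and it is matched by the crude Lebesgue-measure lower bound above.
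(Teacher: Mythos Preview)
Your proof is correct and follows essentially the same route as the paper: choose $E$ with unit ball $B = Z^\circ$, extract the top-order coefficient $4^{-n}\mu_n^E(B) = 4^{-n}\vol_n(Z)\vol_n(Z^\circ)$ from Theorem~\ref{T:hypermetric-mag}, pair it with the lower bound $\Mag{tB,\norm{\cdot}} \ge t^n/n!$, and let $t\to\infty$. The only difference is that the paper cites the lower bound from the literature, whereas you supply its standard proof (Lebesgue measure in \eqref{E:mag-def} together with $\int_{\R^n} e^{-\norm{u}}\,du = n!\,\vol_n(B)$); this is exactly how that cited result is proved.
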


\begin{proof}
  Let $E = (\R^n, \norm{\cdot})$ be the hypermetric normed space with
  unit ball $B = Z^\circ$.  Theorem
  \ref{T:hypermetric-mag} implies that for $t \to \infty$,
  \[
    \Mag{t B,\norm{\cdot}} \le 4^{-n} \mu_n^E(B) t^n + O(t^{n-1}) =
    4^{-n} \vol_n(B) \vol_n(B^\circ) t^n + O(t^{n-1}).
  \]
  On the other hand, for each $t > 0$ we have the lower bound
  \[
    \Mag{t B, \norm{\cdot}} \ge \frac{t^n}{n!}
  \]
  (see \cite[Theorem 3.5.6]{Leinster} or \cite[Proposition
  4.13]{LeMe-survey}). Combining these, we obtain
  \[
    \frac{4^n}{n!} \le \vol_n(B) \vol_n(B^\circ) + O(t^{-1}) =
    \vol_n(Z) \vol_n(Z^\circ) + O(t^{-1}),
  \]
  and letting $t \to \infty$ proves the claim.
\end{proof}

\begin{rmk}
  Although we considered $\Mag{tB, \norm{\cdot}}$ in the above proof
  for convenience, we could equally well consider
  $\Mag{tK, \norm{\cdot}}$ for any $n$-dimensional convex body $K$
  (with the norm still corresponding to $B = Z^\circ$) and obtain the
  same result.
\end{rmk}

\section{Application: Sudakov minoration}
\label{S:Sudakov}

Our last application of Theorem \ref{T:hypermetric-mag} is a new proof
of Sudakov's minoration inequality, which is a key tool in both
high-dimensional convex geometry and the theory of Gaussian processes
(see e.g.\ \cite{AGM} and \cite{Talagrand-book}, respectively).  This
application uses only the special case of Theorem
\ref{T:hypermetric-mag} when $E = \ell_2^n$, proved previously in
\cite{MM-intrinsic}.  In that case, the first upper bound in Theorem
\ref{T:hypermetric-mag} can be combined with a stronger counterpart to
Lemma \ref{T:mu1-ub} in Euclidean space to deduce the following
sharper version of the second upper bound.

\begin{cor}
  \label{T:mag-V1}
  If $K \in \mathcal{K}^n$, then
  \[
    \Mag{K, \norm{\cdot}_2} \le e^{C V_1(K)^{2/3}}.
  \]  
\end{cor}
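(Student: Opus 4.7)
The plan is to specialize the first inequality of Theorem \ref{T:hypermetric-mag} to $E = \ell_2^n$, which reads
\[
  \Mag{K, \norm{\cdot}_2} \le \sum_{m=0}^n \frac{\omega_m}{4^m} V_m(K),
\]
and then to replace the use of Lemma \ref{T:mu1-ub} by the Aleksandrov inequality for intrinsic volumes,
\[
  V_m(K) \le V_m(B_2^n)\left(\frac{V_1(K)}{V_1(B_2^n)}\right)^m, \qquad 1 \le m \le n.
\]
This is the ``stronger counterpart'' to Lemma \ref{T:mu1-ub} available in Euclidean space: it is genuinely sharper because it compares $V_m(K)$ to the Euclidean ball in dimension $m$, rather than using the generic factorial bound $\frac{1}{m!}\mu_1^E(K)^m$ valid in every hypermetric normed space.

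The next step is to estimate the resulting coefficient. Substituting the closed-form expressions $V_m(B_2^n) = \binom{n}{m}\omega_n/\omega_{n-m}$ and $V_1(B_2^n) = n\omega_n/\omega_{n-1}$ and invoking Stirling's formula, one obtains, uniformly in $n \ge m$,
\[
  \frac{\omega_m V_m(B_2^n)}{4^m V_1(B_2^n)^m} \le \frac{C_0^m}{m^{3m/2}}
\]
for some absolute constant $C_0$. The exponent $3/2$ is the heart of the matter: a factor of order $m^{-m/2}$ arises from $\omega_m = \pi^{m/2}/\Gamma(m/2+1)$, while the asymptotic identity $V_m(B_2^n)/V_1(B_2^n)^m \sim 1/m!$ contributes a further factor of order $m^{-m}$. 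Uniformity in $n$ follows from standard monotonicity properties of the ratios $\omega_n/\omega_{n-1}$ and $\omega_n/\omega_{n-m}$.

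With the coefficient bound in place, it remains to estimate $\sum_{m=0}^n (C_0 V_1(K)/m^{3/2})^m$. A calculus-of-variations optimization of $f(m) = m\log(C_0 V_1(K)) - \frac{3m}{2}\log m$ produces a maximizer $m^\star \asymp V_1(K)^{2/3}$ at which $f(m^\star) = O(V_1(K)^{2/3})$, so each term is at most $e^{C V_1(K)^{2/3}}$ for a suitable $C$. The full sum exceeds its maximum by at most a polynomial-in-$V_1(K)$ factor (since terms with $m \gg m^\star$ decay superexponentially), and this polynomial factor is absorbed into the constant in the exponent. The main obstacle is verifying the Stirling estimate with the correct exponent $3/2$ uniformly in $n$; once that is established, the exponent $2/3$ in the final bound is a direct consequence of the above elementary optimization.
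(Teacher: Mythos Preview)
Your approach is essentially the paper's: specialize Theorem \ref{T:hypermetric-mag} to $\ell_2^n$, replace Lemma \ref{T:mu1-ub} by an Alexandrov--Fenchel consequence, observe that the resulting coefficients decay like $m^{-3m/2}$, and sum. The one substantive difference is in the form of the Alexandrov--Fenchel input. The paper invokes the Chevet--McMullen inequality $V_m(K) \le \frac{1}{m!} V_1(K)^m$ directly; this is already dimension-free, so the sum becomes
\[
\sum_{m\ge 0}\frac{1}{\Gamma(1+m/2)\,m!}\left(\frac{\sqrt{\pi}\,V_1(K)}{4}\right)^m,
\]
a Wright-type series with no $n$ in sight, and the $e^{cx^{2/3}}$ bound then follows from a single Stirling estimate. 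Your normalized form $V_m(K)\le V_m(B_2^n)(V_1(K)/V_1(B_2^n))^m$ is equivalent in strength (indeed, applying Chevet--McMullen to $K=B_2^n$ shows your coefficient is at most $1/m!$), but it routes the computation through the $n$-dependent quantities $V_m(B_2^n)$ and $V_1(B_2^n)$ and then obliges you to argue uniformity in $n$; the paper's route simply sidesteps that step.
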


Throughout this section, $c$, $C$, and $C'$ refer to absolute positive
constants whose values may differ from one instance to another.

\begin{rmk}
  Corollary \ref{T:mag-V1} can be extended to infinite-dimensional
  Hilbert spaces, similarly to Corollary \ref{T:id-one-point}, but for
  simplicity we will restrict attention to finite dimensions in this
  section.
\end{rmk}

\begin{proof}[Proof of Corollary \ref{T:mag-V1}]
  It was independently shown by Chevet \cite[Lemme 4.2]{Chevet} and McMullen
  \cite[Theorem 2]{McMullen-ineq} that the Alexandrov--Fenchel inequalities imply that
  $V_m \le \frac{1}{m!} V_1^m$ for every $m \ge 1$. As observed in
  formula (17) of \cite{MM-intrinsic}, Theorem \ref{T:hypermetric-mag}
  then implies that
  \[
    \Mag{K, \norm{\cdot}_2} \le
    \sum_{m=0}^n \frac{\omega_m}{m!}  \left(\frac{V_1(K)}{4}\right)^m
    = \sum_{m=0}^n \frac{1}{\Gamma \bigl(1 + \frac{m}{2}\bigr) m!}
    \left(\frac{\sqrt{\pi} V_1(K)}{4}\right)^m.
  \]
  
  We now consider the function
  \[
    f(x) = \sum_{m=0}^\infty \frac{x^m}{\Gamma \bigl(1 + \frac{m}{2}\bigr) m!},
  \]
  a special case of Wright's generalized hypergeometric function.  We
  claim that $f(x) \le e^{c x^{2/3}}$ for $x > 0$, which will complete
  the proof.

  If $x \le 1$, then since $\Gamma \bigl(1 + \frac{m}{2}\bigr) \ge
  \frac{\sqrt{\pi}}{2}$ for every $m \ge 0$, we have
  \[
    f(x) \le \exp\left(\frac{2}{\sqrt{\pi}} x\right) \le
    \exp\left(\frac{2}{\sqrt{\pi}} x^{2/3}\right).
  \]
  For $x \ge 1$, Stirling's approximation implies that
  \begin{align*}
    f(x) & = 1 + \sum_{k=1}^\infty \frac{x^{2k}}{(2k)! k!}
           + \sum_{k=1}^\infty \frac{x^{2k-1}}{(2k-1)! \Gamma(1 +
           \frac{2k-1}{2})}
           \le 1 + \sum_{k=1}^\infty \frac{c^k x^{2k}}{(3k)!} \\
         & = 1 + \sum_{k=1}^\infty \frac{(c^{1/3} x^{2/3})^{3k}}{(3k)!}
           \le e^{c^{1/3} x^{2/3}}.   
           \qedhere
  \end{align*}
\end{proof}

Since $V_1$ is $1$-homogeneous, Corollary \ref{T:mag-V1} is equivalent
to the following.

\begin{cor}
  \label{T:mag-V1-t}
  If $K \in \mathcal{K}^n$, then
  \[
    V_1(K) \ge C \sup_{t > 0} t^{-1} \bigl(\log \Mag{t K, \norm{\cdot}_2}\bigr)^{3/2}.
  \]
\end{cor}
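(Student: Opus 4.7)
The plan is to derive this directly from Corollary \ref{T:mag-V1} by a scaling argument, exploiting the $1$-homogeneity of $V_1$. Applying Corollary \ref{T:mag-V1} to the dilate $tK$ yields, for every $t > 0$,
\[
  \Mag{tK, \norm{\cdot}_2} \le \exp\!\bigl(C\, V_1(tK)^{2/3}\bigr) = \exp\!\bigl(C\, t^{2/3}\, V_1(K)^{2/3}\bigr).
\]
Taking logarithms on both sides (which is unambiguous since magnitude of a nonempty compact positive definite space is at least $1$, so the logarithm is nonnegative), raising to the power $3/2$, and dividing by $t$ gives
\[
  t^{-1} \bigl(\log \Mag{tK, \norm{\cdot}_2}\bigr)^{3/2} \le C^{3/2}\, V_1(K).
\]

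Taking the supremum over $t > 0$ and renaming $C^{-3/2}$ as the new constant $C$ yields the claimed inequality.

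Because the derivation is a purely algebraic rearrangement, there is no real obstacle here; this is exactly the sense in which the author states that the two corollaries are equivalent. The only minor point to verify is that the manipulation reverses: namely, specializing the supremum-form inequality to a single value of $t$ (say $t = 1$) and combining with homogeneity would recover Corollary \ref{T:mag-V1} up to constants, confirming genuine equivalence rather than a strict weakening.
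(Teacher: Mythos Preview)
Your argument is correct and is exactly the approach the paper takes: the author simply remarks that since $V_1$ is $1$-homogeneous, Corollary \ref{T:mag-V1} is equivalent to Corollary \ref{T:mag-V1-t}, with no further details given. Your write-up spells out precisely this scaling-and-rearrangement step.
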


\begin{cor}[Sudakov's minoration inequality]
  \label{T:Sudakov}
  Let $K \in \mathcal{K}^n$, and suppose there exist
  $x_1, \dots, x_N \in K$ such that $\norm{x_i - x_j}_2 \ge \eps$
  whenever $i\neq j$. Then
  \[
    V_1(K) \ge C \eps \sqrt{\log N}.
  \]
\end{cor}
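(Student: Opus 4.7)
The plan is to deduce Corollary \ref{T:Sudakov} directly from Corollary \ref{T:mag-V1-t} by producing a good lower bound on $\Mag{tK, \norm{\cdot}_2}$ for a well-chosen $t > 0$. The guiding principle, standard in magnitude theory, is that an $\eps$-separated $N$-point subset of $K$ should force the magnitude of $tK$ to be of order $N$ as soon as $t\eps$ is comparable to $\log N$.

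First I would pass from $K$ to its finite subset $X = \{x_1, \dots, x_N\}$ and use the monotonicity of magnitude under set inclusion in positive definite metric spaces to write $\Mag{tK, \norm{\cdot}_2} \geq \Mag{tX, \norm{\cdot}_2}$ for every $t > 0$. To bound $\Mag{tX, \norm{\cdot}_2}$ from below, I would plug the counting measure $\mu = \sum_{i=1}^{N} \delta_{tx_i}$ into the supremum on the right-hand side of \eqref{E:mag-def}. The numerator is $N^2$, while the denominator $\sum_{i,j} e^{-t\norm{x_i - x_j}_2}$ is at most $N + N(N-1)e^{-t\eps}$ since the $\eps$-separation controls every off-diagonal term.

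The key step is the choice $t = \eps^{-1}\log N$, which forces $(N-1)e^{-t\eps} \leq 1$ and hence $\Mag{tK, \norm{\cdot}_2} \geq N/2$. Applying Corollary \ref{T:mag-V1-t} with this $t$ then yields
\[
V_1(K) \geq C\, t^{-1} \bigl(\log(N/2)\bigr)^{3/2} = C\, \frac{\eps}{\log N}\, \bigl(\log(N/2)\bigr)^{3/2},
\]
which is bounded below by $C'\eps\sqrt{\log N}$ once $N$ is large enough that $\log(N/2) \geq \tfrac{1}{2}\log N$. For the remaining small cases $N = 2,3$, the inequality is trivial after adjusting the absolute constant, since the existence of two points in $K$ at distance $\geq \eps$ already gives $V_1(K) \geq \eps$ by monotonicity of $V_1$ applied to a segment of length $\eps$.

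I do not anticipate any genuine obstacle; once Corollary \ref{T:mag-V1-t} is in hand, the whole proof reduces to a single well-chosen rescaling. The only point requiring care is checking that the factor $t^{-1} \sim \eps/\log N$ supplied by the corollary combines correctly with the factor $(\log\Mag{tK,\norm{\cdot}_2})^{3/2} \sim (\log N)^{3/2}$ from the magnitude lower bound to produce exactly the target order $\eps\sqrt{\log N}$. This works precisely because the exponent $3/2$ in Corollary \ref{T:mag-V1-t} is tuned to the $\log N$-scale separation required to push magnitude up to order $N$.
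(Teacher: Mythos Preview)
Your proposal is correct and follows essentially the same route as the paper: plug the uniform measure on the $\eps$-separated points into \eqref{E:mag-def}, choose $t$ of order $\eps^{-1}\log N$ to force the magnitude to be of order $N$, and then read off $\eps\sqrt{\log N}$ from Corollary \ref{T:mag-V1-t}. The only cosmetic difference is that the paper takes $t = \eps^{-1}\log(2N)$, which yields $\Mag{tK,\norm{\cdot}_2} \ge \tfrac{2}{3}N$ and makes the final estimate work uniformly for all $N \ge 2$ without a separate small-$N$ case.
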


\begin{proof}
  Assume without loss of generality that $N \ge 2$, and let $\mu =
  \sum_{i=1}^N \delta_{x_i}$. By \eqref{E:mag-def},
  \[
    \Mag{t K, \norm{\cdot}_2} \ge \frac{N^2}{\int_K \int_K e^{-t
        \norm{x-y}} \ d\mu(x) d\mu(y)}
    \ge \frac{N}{1 + N e^{-t \eps}}.
  \]
  If $t = \frac{\log (2N)}{\eps}$ this implies that
  $\Mag{t K, \norm{\cdot}_2} \ge \frac{2}{3} N$, and so Corollary
  \ref{T:mag-V1-t} implies that
  \[
    V_1(K) \ge C \eps \frac{\left[\log \left(\frac{2}{3} N
        \right)\right]^{3/2}}{\log (2N)}
    \ge C' \eps \sqrt{\log N}.
    \qedhere
  \]
\end{proof}

\begin{rmk}
  If the supremum in our definition \eqref{E:mag-def} of magnitude is
  restricted to positive measures $\mu$, we obtain a quantity called
  the \textbf{maximum diversity} of $(X,d)$, denoted $D_{\max}(X,d)$
  (see \cite{MM-pdms,LeRo}). The above proof of Corollary
  \ref{T:Sudakov} shows that
  \[
    \sup_{t > 0} t^{-1} \bigl(\log D_{\max}(t K,
    \norm{\cdot}_2)\bigr)^{3/2} \ge C \sup_{\eps > 0} \eps \sqrt{\log
      N(K,\eps)},
  \]
  where $N(K,\eps)$ is the maximum size of a collection of
  $\eps$-separated points in $K$. It can similarly be shown that 
  \[
    \sup_{t > 0} t^{-1} \bigl(\log D_{\max}(t K,
      \norm{\cdot}_2)\bigr)^{3/2} \le C' \sup_{\eps > 0} \eps
    \sqrt{\log N(K,\eps)}
  \]
  (cf.\ the proof of \cite[Theorem 7.1]{MM-mag-etc}).  It follows that
  Sudakov's minoration inequality is equivalent, up to the value of the
  constant $C$, to the inequality
  \[
        D_{\max}(K, \norm{\cdot}_2) \le e^{C V_1(K)^{2/3}},
  \]
  a weaker counterpart of Corollary \ref{T:mag-V1}. 

  This observation suggests trying to prove sharper lower bounds on
  $V_1(K)$ than provided by Sudakov's inequality by using Corollary
  \ref{T:mag-V1} and bounding $\Mag{K, \norm{\cdot}_2}$ from below by
  leveraging the fact that the supremum in \eqref{E:mag-def} is over a
  space of \emph{signed} measures. We recall that optimal lower bounds
  on $V_1(K)$ are given by Talagrand's celebrated majorizing measure
  theorem \cite{Talagrand-rgp} and its more recent reformulations
  \cite{Talagrand-book}, but those bounds are not easy to apply in
  practice (see e.g.\ \cite{vanHandel} for discussion of this). In
  general the supremum in \eqref{E:mag-def} is not achieved even in
  the space of signed measures, but the definition of magnitude can be
  reformulated in several ways that invite consideration from the
  perspective of distributions and partial differential equations
  \cite{MM-mag-etc}. This perspective has led to the sharpest known
  results on magnitude in Euclidean spaces
  \cite{BaCa,Willerton-ball1,Willerton-ball2,GiGo-AJM,GiGo-Willmore,GGL},
  and may be similarly fruitful in this setting.
\end{rmk}

\section{Some remarks on $\ell_1$ integral geometry}
\label{S:integral}

The Holmes--Thompson intrinsic volumes were introduced in order to
find natural generalization of results from integral geometry in
Euclidean spaces to more general normed spaces (or still more
generally, Finsler manifolds).  In particular, Schneider and Wieacker
\cite{ScWi} showed that in hypermetric normed spaces, the
Holmes--Thompson intrinsic volumes satisfy versions of the classical
Crofton formula; see \cite{AlFe,Bernig} for versions in more general
settings.

In \cite{Leinster-int}, Leinster similarly proved a suite of results
involving the $\ell_1$ intrinsic volumes which are counterparts of
classical integral geometric theorems. Since Corollary
\ref{T:l1-HT-iv} shows that up to scaling, Leinster's $\ell_1$
intrinsic volumes $V_m'$ applied to convex sets are precisely the
Holmes--Thompson intrinsic volumes for $\ell_1^n$, one might guess
that Leinster's theory is subsumed by Holmes--Thompson integral
geometry. However, there are at least two major parts of Euclidean
integral geometry for which Leinster proved $\ell_1$ analogues in
\cite{Leinster-int}, but for which no general Holmes--Thompson version
is known.

The first is Hadwiger's theorem (see, e.g.\
\cite{KlRo,ScWe,Schneider}), which states that every continuous, rigid
motion-invariant convex valuation on $\ell_2^n$ is a linear
combination of the Euclidean intrinsic volumes.  In general normed
spaces, Proposition \ref{T:HT-unique} classifies only homogeneous
valuations with a normalization condition that serves as a proxy for
rigid motion-invariance, whereas Hadwiger's theorem also implies that
invariant convex valuations are linear combinations of these
homogeneous valuations.  In $\ell_1^n$, Leinster proved an exact
analogue of Hadwiger's theorem assuming invariance only under the
isometry group for the $\ell_1^n$ norm \cite[Theorem
5.4]{Leinster-int}. To compensate for this smaller isometry group,
Leinster assumes the valuations are defined and satisfy
\eqref{E:inclusion-exclusion} on the larger class of
\textbf{$\ell_1$-convex} sets, i.e., sets that are geodesic with
respect to the $\ell_1^n$ metric. (Indeed, the fact that Leinster's
$\ell_1$ intrinsic volumes satisfy \eqref{E:inclusion-exclusion} for
all $\ell_1$-convex sets is crucial to the proof of Theorem
\ref{T:l1-mag}, even when that theorem is restricted to convex sets.)
This suggests the possibility of stronger Hadwiger-like theorems in
normed spaces than Proposition \ref{T:HT-unique} for valuations with
suitably chosen domains. As discussed in \cite{Leinster-int}, however,
the most naive generalization of the $\ell_1$ and Euclidean versions
of Hadwiger's theorem is typically false.

Second, in \cite[Theorem 6.2]{Leinster-int} Leinster proved the
following $\ell_1$ version of Steiner's formula (see e.g.\
\cite[equation (4.1)]{Schneider}): if $X \subseteq \R^n$ is
$\ell_1$-convex then
\begin{equation}
  \label{E:l1-Steiner}
  \vol_n\bigl(X + t[0,1]^n\bigr) = \sum_{m=0}^n V_m'(X) t^{n-m}.
\end{equation}
This formula implies in particular that the $\ell_1$ intrinsic
volumes, like the Euclidean intrinsic volumes, are particular
instances of mixed volumes \cite[Section 5.1]{Schneider}.
Holmes--Thompson intrinsic volumes are not known to have
representations as mixed volumes in general; furthermore, a
Steiner-like formula such as \eqref{E:l1-Steiner}, which would require
the intrinsic volumes on the right hand side to be mixed volumes of a
particularly simple form, can only hold under additional restrictions
on the normed space $E$.  See \cite[Section 5]{Schneider-ivms} for
some partial results and discussion of these issues.

We end with a simple observation related to \eqref{E:l1-Steiner}.  As
noted in \cite{LeMe-extremal}, the quantity
\[
  \mathcal{W}'(X) = \sum_{m=0}^n V_m'(X)
\]
is an $\ell_1$ analogue of the \textbf{Wills functional} (see e.g.\
\cite{AlHeYe}), which can be defined by
\[
\mathcal{W}(K) = \sum_{m=0}^n V_m(K).
\]
The Wills functional was introduced in \cite{Wills}, where it was
conjectured that
\[
\# (K \cap \Z^n) \le \mathcal{W}(K)
\]
for any $K \in \mathcal{K}^n$.  This was shown by Hadwiger
\cite{Hadwiger-Wills} to be false for sufficiently large $n$.
However, \eqref{E:l1-Steiner} implies that an $\ell_1$ version of this
conjecture is true in all dimensions.

\begin{prop}
  \label{T:l1-Wills}
  Suppose that $X \subseteq \R^n$ is compact and $\ell_1$-convex. Then
  \[
    \# (X \cap \Z^n) \le \mathcal{W}'(X).
  \]
\end{prop}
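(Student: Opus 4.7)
The proof is a short lattice-packing argument that exploits the $\ell_1$ Steiner formula \eqref{E:l1-Steiner} evaluated at $t=1$. Setting $t=1$ in \eqref{E:l1-Steiner} immediately yields
\[
\vol_n\bigl(X + [0,1]^n\bigr) = \sum_{m=0}^n V_m'(X) = \mathcal{W}'(X),
\]
so it suffices to show that $\#(X \cap \Z^n) \le \vol_n(X + [0,1]^n)$.

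For this, I would assign to each lattice point $z \in X \cap \Z^n$ the unit cube $C_z = z + [0,1]^n$. Two observations make the argument go through: (i) since $z \in X$, we have $C_z \subseteq X + [0,1]^n$; and (ii) for distinct integer vectors $z, z' \in \Z^n$, the open cubes $z + (0,1)^n$ and $z' + (0,1)^n$ are disjoint (any common interior point would force $z = z'$ coordinatewise). Therefore the cubes $\{C_z : z \in X \cap \Z^n\}$ have pairwise disjoint interiors and each has Lebesgue measure $1$, which gives
\[
\#(X \cap \Z^n) = \sum_{z \in X \cap \Z^n} \vol_n(C_z) \le \vol_n\Bigl(\bigcup_{z \in X \cap \Z^n} C_z\Bigr) \le \vol_n\bigl(X + [0,1]^n\bigr).
\]
Combining this with the Steiner identity above proves the proposition.

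There is essentially no obstacle here, since all the work has been done in establishing \eqref{E:l1-Steiner}; the only thing to be careful about is the hypothesis of $\ell_1$-convexity, which is needed to invoke \eqref{E:l1-Steiner}. Note that compactness of $X$ ensures both that $X \cap \Z^n$ is finite and that $X + [0,1]^n$ has finite volume, so everything in the displayed chain of inequalities is finite and well-defined.
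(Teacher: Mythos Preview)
Your proof is correct and is essentially identical to the paper's: the paper writes the one-line chain $\#(X\cap\Z^n) = \vol_n\bigl((X\cap\Z^n)+[0,1]^n\bigr) \le \vol_n\bigl(X+[0,1]^n\bigr) = \mathcal{W}'(X)$, and your cube-packing argument is precisely the verification of the first equality and the inclusion, with the final step again given by \eqref{E:l1-Steiner} at $t=1$.
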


\begin{proof} 
  By \eqref{E:l1-Steiner},
  \[
    \# (X \cap \Z^n) = \vol_n\bigl((X\cap \Z^n) +
    [0,1]^n\bigr) \le \vol_n\bigl(X + [0,1]^n\bigr) = \mathcal{W}'(X).
    \qedhere
  \]
\end{proof}

\section*{Acknowledgements}
This work was supported in part by Collaboration Grant 315593 from the
Simons Foundation. This work was partly done while the author was
visiting the Mathematical Institute of the University of Oxford, with
support from ERC Advanced Grant 740900 (LogCorRM) to Prof.\ Jon
Keating and Simons Fellowship 678148 to Elizabeth Meckes. The author
thanks the Institute and Prof.\ Keating for their hospitality, Thomas
Wannerer for helpful comments and pointers to the literature on
Holmes--Thompson intrinsic volumes, and an anonymous referee for
helpful comments on the exposition.

\bibliographystyle{plain}
\bibliography{mag-HT}

\end{document}